\documentclass{article}
\usepackage{amssymb}
\usepackage{amsthm}
\usepackage{systeme}
\usepackage{geometry}%
\usepackage[pagebackref]{hyperref}%
\usepackage{lmodern,mathtools}%
\usepackage[utf8]{inputenc}%
\usepackage[T1]{fontenc}%
\usepackage{todonotes}%
\usepackage{color}%
\usepackage{dsfont}
\usepackage{pgf,tikz,pgfplots}
\pgfplotsset{compat=1.14}
\usepackage{mathrsfs}
\usetikzlibrary{arrows}
\usepackage{enumitem}  
%\usepackage{imsart}
%\allowdisplaybreaks

\newtheorem{theorem}{Theorem}[section]%
\newtheorem{proposition}[theorem]{Proposition}%
\newtheorem{remark}[theorem]{Remark}%

\geometry{hmargin=2.5cm,vmargin=4cm}
\title{Wishart processes : mean-field limit, long time behavior, and free probability}
\author{Ez\'echiel Kahn\footnote{CERMICS, Ecole des Ponts, INRIA, Marne-la-Vall\'ee, France. Email: ezechiel.kahn@enpc.fr}}
\date{\today}

\begin{document}

\maketitle

\begin{abstract}
This paper is devoted to the study of the eigenvalues of the Wishart process which are the analog of the Dyson Brownian Motion for covariance matrices. Such processes were in particular studied by Bru. The mean field convergence of the empirical measure of these eigenvalues was proved Malecki and Perez. In this paper, we provide a new approach to the mean field convergence problem using tools from the free rectangular convolution theory developed by Benaych-Georges, which in particular allows to compute explicitly the limit measure valued flow. We highlight the link with the integro-differential equation related to the mean field limit and its translation into a complex Burgers partial differential equation.
    %In this note, we show by a new approach that the empirical measure of the eigenvalues of a Wishart process  admits a mean field limit as soon as its initial condition does, which was already proved by  Malecki and  Pérez, and that this limit flow can be expressed using a rectangular free convolution. This expression allows to compute the long-time behavior of the latter limit measure valued flow. This observation allows to compute the expression of the limit measure valued flow in the more general case of a beta-Wishart process.
\end{abstract}
\textbf{Keywords: }stochastic differential equations, random matrices, free probability theory.

\textbf{AMS Subject Classification (2020):} 60B20, 60G07, 46L54, 60F05, 60H15.

%\tableofcontents
\section{Introduction }

We will denote by $\mathbb{N}=\{0,1,2,\dots\}$ the set of non-negative integers, and by $\mathbb{N}^*=\{1,2,3,\dots\}$ the set of positive integers. Let $n,m\in\mathbb{N}^*$ such that $n\leq m$ and let $(M_t)_t$ be a stochastic process taking its values in the space of  $n\times m$ matrices with real entries verifying the following stochastic differential equation (SDE)
\begin{equation}
\label{eq:matrix}
    dM_t = \kappa dW_t-\gamma M_tdt,
\end{equation}
where $W$ is a $n\times m$ matrix filled with independent Brownian motions, $\gamma\geq0$ and $\kappa\geq0$. $M$ is thus a matrix whose entries are independent Ornstein-Uhlenbeck processes just as the one considered in \cite{MR1132135} by Bru. The reader will find in \cite{MR1871699} an analysis of the complex analog of Bru's model.

For all $t\ge0$ let   $(x^{1,n}_t,\dots,x^{n,n}_t)$ and $(\lambda^{1,n}_t,\dots,\lambda^{n,n}_t)$ be respectively the eigenvalues of the $n\times n$ square matrices $\sqrt{\frac{1}{m}M_tM_t^*}$ and $\frac{1}{m}M_tM_t^*$ where $*$ is the transposition operator. 

It is proved in \cite{MR991060} and \cite{MR1132135} that the eigenvalues $(\lambda^{1,n}_t,\dots,\lambda^{n,n}_t)$  satisfy the system of SDEs
\begin{equation}
\label{eds}
    d\lambda^{i,n}_t=\frac{2\kappa }{\sqrt{m}}\sqrt{\lambda^{i,n}_t}dB^i_t+\kappa^2dt-2\gamma\lambda^{i,n}_tdt+\frac{\kappa^2 }{m}\sum_{j\neq i}\frac{\lambda^{i,n}_t+\lambda^{j,n}_t}{\lambda^{i,n}_t-\lambda^{j,n}_t}dt \text{ for all }i\in\{1,\dots n\}
\end{equation}
\[0\leq\lambda^{1,n}_t<\dots<\lambda^{n,n}_t\text{ a.s. dt-a.e.},\]
where $B^1,\dots, B^n$ are independent Brownian motions.  It is proved for instance in \cite{MR3296535} that the SDE (\ref{eds}) admits a strong pathwise unique solution.

 If $(\lambda^{1,n}_t,\dots,\lambda^{n,n}_t)_t$ is solution to the SDE (\ref{eds}), then a formal calculus gives for $x^{i,n}=\sqrt{\lambda^{i,n}}$: 
\begin{equation*}
dx_t^{i,n} =\frac{\kappa }{\sqrt{m}}dB_t^{i}+ \left(\left(1-\frac{1 }{m}\right)\frac{\kappa^2}{2x_t^{i,n}}-\gamma x_t^{i,n} +\frac{\kappa^2 }{2mx_t^{i,n}}\sum_{j\ne i}\frac{(x_t^{i,n})^2+(x_t^{j,n})^2}{(x_t^{i,n})^2-(x_t^{j,n})^2}\right)dt 
\end{equation*}
\[0\leq x^{1,n}_t<\dots<x^{n,n}_t\text{ a.s. dt-a.e.}\]

We respectively define the empirical measure of the eigenvalues of $\frac{1}{m}M_tM_t^*$ and the symmetrized empirical measure of the eigenvalues of   $\sqrt{\frac{1}{m}M_tM^*_t}$  by
\begin{equation*}
    \nu^n_t =\frac{1}{n}\sum_{i=1}^n\delta_{\lambda^{i,n}_t} \quad \text{and}\quad  \mu^n_t = \sqrt{\nu^n_t}= \frac{1}{2n}\sum_{i=1}^n\left(\delta_{x^{i,n}_t}+\delta_{-x^{i,n}_t}\right),
\end{equation*}
where here and in the rest of the paper, for any probability measure $\mu$ on $[0,+\infty)$, $\sqrt{\mu} = \mathrm{sym}(\sqrt{.}\sharp\mu)$ denotes the symmetrization of the push-forward of $\mu$ by the map $v\mapsto\sqrt{v}$. The application $\mathrm{sym}$ is fully defined in Section \ref{sect:preliminaries}. These measure sequences are random variables sequences in the space of probability measures on respectively $\mathbb{R}_+$ and $\mathbb{R}$. We will thus speak about weak convergence in probability or almost surely throughout the paper in the sense that the sequence converges in probability or almost surely in the  space of probability measures on $\mathbb{R}_+$ (or $\mathbb{R}$) with the distance of Prokhorov (see for instance \cite{billingsley2013convergence}).
%As the eigenvalues of $\sqrt{\frac{1}{m}M_tM^*_t}$ are taken non-negative, $\mu^n_t$ and $\frac{1}{n}\sum_{i=1}^n\delta_{x^{i,n}_t}$ contains the same information as we will see in (\ref{eq:syminv}). 
Let us note that by  non negativeness of the $(\lambda^{1,n}_t,\dots,\lambda^{n,n}_t)_{ t\geq0}$, symmetry of $\mu^n_t$ and continuity of the functions $v\mapsto v^2$ and $v\mapsto \sqrt{v}$, there is equivalence between the convergence of $(\nu^n)_n$ and the convergence of $(\mu^n)_n$.

In this note, we are interested in the convergence and limit of these measure processes when $n$ and $m$ go to infinity at a rate $n/m\rightarrow\alpha\in(0,1]$.

Our main result gives a complete answer to the limit of $(\mu^n)_n$  problem, using free probability tools (in particular the rectangular free convolution of index $\alpha$ denoted by $\boxplus_\alpha$ defined in Section \ref{sect:preliminaries}).  
Let us first denote by $\mu^{\mathrm{MP}_{\rho, \sigma}}$  the Marcenko-Pastur distribution with shape parameter $\rho$ and scale parameter $\sigma$ which admits the density with respect to the Lebesgue measure : \[x\mapsto\frac{\sqrt{( a_+-x)(x-  a_-)}}{2\pi\rho x \sigma^2}\mathds{1}_{[ a_-, a_+]} \text{ where }a_\pm=\sigma^2(1\pm\sqrt{\rho})^2.\]

\begin{theorem}[Mean-field limit]\label{th:mfl}
Let us assume $n\leq m$, and that when $n$ grows to infinity, $m$ grows to infinity too, with
%$\frac{n}{m}\underset{n\rightarrow+\infty}{\longrightarrow}\alpha\in(0,1]$.
$\frac{n}{m}\rightarrow\alpha\in(0,1]$.
  Let us also assume  that $(\mu^n_0)_n$ converges weakly in probability towards a  non random probability measure $\mu_0$. Then for all $t\geq0$,
  ${(\mu^n_t)}_{n\geq0}$ converges weakly in probability and
  \[
    \mu_t:=\lim_{n\to\infty}\mu^n_t=(e^{-\gamma t}\mu_0)\boxplus_\alpha \sqrt{\mu^{\mathrm{MP}_{\alpha, \sigma_t}}},
  \]
  where $\boxplus_\alpha$ is the rectangular free convolution of  parameter $\alpha$, $e^{-\gamma t}\mu_0$ denotes the  push-forward of $\mu_0$ by the map $v\mapsto e^{-\gamma t}v$,  and where for all $t\geq0$ :
  \begin{equation*}
      \sigma_t^2 =\left\{
      \begin{aligned}
      &\frac{\kappa^2}{2\gamma}(1-e^{-2\gamma t}) \quad\text{if}\quad \gamma\neq0\\
      &\kappa^2t \quad\text{if}\quad \gamma=0.
      \end{aligned}
      \right.
  \end{equation*}
 Moreover, if $\gamma\neq0$ and with $\sigma_\infty = \sqrt{\frac{\kappa^2}{2\gamma}}$, we have :
 \[\underset{t\rightarrow+\infty}{\lim}\mu_t = \sqrt{\mu^{\mathrm{MP}_{\alpha,\sigma_\infty  }}}.\]
\end{theorem}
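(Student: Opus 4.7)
The plan is to first solve the matrix SDE (\ref{eq:matrix}) explicitly. Applying It\^o's formula to $e^{\gamma t} M_t$ entrywise yields $d(e^{\gamma t} M_t) = \kappa e^{\gamma t} dW_t$, and hence
\[
\frac{1}{\sqrt{m}}\, M_t = e^{-\gamma t}\,\frac{1}{\sqrt{m}}\, M_0 \;+\; \frac{1}{\sqrt{m}}\, G_t, \qquad G_t := \kappa \int_0^t e^{-\gamma(t-s)}\, dW_s.
\]
The matrix $G_t$ is independent of $M_0$ and its entries are i.i.d. centered Gaussians of variance $\kappa^2\int_0^t e^{-2\gamma(t-s)} ds$, which an elementary integration identifies with the quantity $\sigma_t^2$ in the statement. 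Recalling that $\mu^n_t$ is by definition the symmetrized singular value distribution of $\frac{1}{\sqrt{m}}M_t$, this reduces the problem to understanding the symmetrized singular value distribution of the sum of two independent rectangular matrices.

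Next I would feed this decomposition into the rectangular free probability machinery of Benaych-Georges reviewed in Section \ref{sect:preliminaries}. Two classical inputs are needed: (i) the symmetrized singular value distribution of $\frac{1}{\sqrt{m}} G_t$ converges weakly to $\sqrt{\mu^{\mathrm{MP}_{\alpha, \sigma_t}}}$, which is simply a rescaled Marchenko-Pastur law with shape $\alpha$; (ii) because $G_t$ is bi-unitarily invariant and independent of $M_0$, it is asymptotically free with rectangular ratio $\alpha$ from $e^{-\gamma t}\frac{1}{\sqrt{m}} M_0$. The assumption $\mu^n_0 \to \mu_0$ in probability entails that the symmetrized singular value distribution of the rescaled deterministic part converges to the pushforward $e^{-\gamma t}\mu_0$. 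Applying the continuity of $\boxplus_\alpha$ and the convergence theorem for sums of asymptotically free rectangular matrices then gives the announced weak limit in probability.

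For the long-time statement when $\gamma > 0$, I would simply note that $e^{-\gamma t}\mu_0 \to \delta_0$ weakly while $\sigma_t^2 \to \sigma_\infty^2$, and conclude by continuity of $\boxplus_\alpha$ together with the fact that $\delta_0$ is the neutral element of $\boxplus_\alpha$ (visible directly from its rectangular $R$-transform). The main technical obstacle is transferring the almost-sure asymptotic freeness statements available in the Benaych-Georges theory to a convergence-in-probability statement, given the randomness of $M_0$. I would handle this by conditioning on $M_0$, freezing it as a deterministic sequence whose spectral measure converges to $\mu_0$, applying the almost-sure freeness of the Gaussian matrix $G_t$ against this frozen sequence, and then integrating back via a subsequence-extraction argument based on the probabilistic convergence of $\mu^n_0$. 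A secondary but routine point will be to verify that the various continuity and tightness statements required for $\boxplus_\alpha$ and for the pushforward by $v\mapsto e^{-\gamma t} v$ hold in the weak-in-probability topology.
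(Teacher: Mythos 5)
Your proposal is correct and follows essentially the same route as the paper: the explicit Ornstein--Uhlenbeck decomposition $M_t=e^{-\gamma t}M_0+\kappa\int_0^t e^{\gamma(s-t)}dW_s$, the Marcenko--Pastur limit and bi-unitary invariance of the Gaussian part, Theorem \ref{th:rectfreeconv} for the rectangular free convolution, and, for the long-time limit, continuity of $\boxplus_\alpha$ together with the vanishing of the rectangular $R$-transform of $\delta_0$. Your closing remarks on conditioning on $M_0$ to pass from almost-sure asymptotic freeness to convergence in probability address a point the paper leaves implicit, but the argument is the same.
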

For $\gamma=0$ and $\kappa=1$, this measure valued flow $(\mu_t)_t$ can be seen as the law of the free Wishart process introduced in \cite{capitaine2005free}. The reader will find in \cite{biane1997free,biane1998stochastic,biane2001free} an introduction to the free stochastic calculus theory.

This last result is in fact true in a complex framework  (with complex Brownian motions of the form $(B^1_t+\mathrm{i}B^2_t)_{t\geq0}$ where $(B^1_t)_{t\geq0}$ and $(B^2_t)_{t\geq0}$ are independent real Brownian motions), as the reader will see in Section \ref{sect:complex}. This result might also be extended to a quaternionic framework but the rectangular free convolution theory is not developed yet in this case. We moreover proved the following result.
 
 \begin{theorem}[Commutativity of the limits]\label{th:limcommut}
Under the assumption of Theorem \ref{th:mfl}, let us moreover suppose $\gamma\neq0$ and that for all $n\in\mathbb{N}^*$: $\sum_{i=1}^n\mathbb{E}[\lambda^{i,n}_0]<+\infty$. Let us note $\sigma_\infty=\sqrt{\frac{\kappa^2}{2\gamma}}$. Then we have in the sense of weak convergence 

\[\underset{\underset{n/m\rightarrow\alpha}{n\rightarrow\infty}}{\lim} \underset{t\rightarrow\infty}{\lim}\nu^{n}_t = \underset{t\rightarrow\infty}{\lim} \underset{\underset{n/m\rightarrow\alpha}{n\rightarrow\infty}}{\lim} \nu^{n}_t = \mu^{MP_{\alpha,\sigma_\infty}}.\]

\end{theorem}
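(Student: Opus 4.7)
The right-hand identity is immediate from Theorem~\ref{th:mfl}: the measure $\nu^n_t$ is the pushforward of $\mu^n_t$ by $x\mapsto x^2$ restricted to the non-negative part, so the inner limit converges in probability to $\nu_t$, the analogous pushforward of $\mu_t$. Then $\lim_{t\to\infty}\mu_t = \sqrt{\mu^{\mathrm{MP}_{\alpha,\sigma_\infty}}}$ yields $\lim_{t\to\infty}\nu_t = \mu^{\mathrm{MP}_{\alpha,\sigma_\infty}}$ weakly.

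For the left-hand identity I would exploit the ergodicity of the matrix-valued OU process at fixed $n$. Writing the explicit solution of (\ref{eq:matrix}),
\[
M_t = e^{-\gamma t}M_0 + \kappa\int_0^t e^{-\gamma(t-s)}\,dW_s,
\]
the stochastic integral is a Gaussian matrix with iid entries of variance $\frac{\kappa^2}{2\gamma}(1-e^{-2\gamma t})$, converging in law as $t\to\infty$ to an $n\times m$ matrix $M_\infty$ with iid $\mathcal{N}(0,\sigma_\infty^2)$ entries. The assumption $\sum_{i=1}^n\mathbb{E}[\lambda^{i,n}_0] = \frac{1}{m}\mathbb{E}\|M_0\|_F^2 < +\infty$, together with the decaying prefactor $e^{-\gamma t}$, forces $e^{-\gamma t}M_0 \to 0$ in $L^2$. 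Consequently $M_t \to M_\infty$ in distribution, and by continuity (in the weak topology) of the map $M \mapsto$ (empirical measure of the eigenvalues of $\frac{1}{m}MM^*$), $\nu^n_t$ converges in distribution as $t\to\infty$ to $\nu^n_\infty$, the empirical measure of the eigenvalues of the Wishart matrix $\frac{1}{m}M_\infty M_\infty^*$ with aspect ratio $n/m$ and scale parameter $\sigma_\infty$.

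I would then invoke the classical Marchenko-Pastur theorem for Wishart matrices built from iid Gaussian entries: $\nu^n_\infty$ converges weakly in probability to $\mu^{\mathrm{MP}_{\alpha,\sigma_\infty}}$ as $n\to\infty$ with $n/m\to\alpha$. Concatenating this with the previous step delivers the left-hand identity.

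The main subtlety is to make precise in what sense the inner limit $\lim_{t\to\infty}\nu^n_t$ is taken. For fixed $n$, the Brownian noise never subsides, so the random measures $\nu^n_t$ do not admit an almost-sure long-time limit; the natural interpretation is convergence in distribution on the space of probability measures equipped with the Prokhorov metric. A continuity/Portmanteau argument is then required to show that convergence in distribution at the $t$-stage, composed with convergence in probability towards a deterministic measure at the $n$-stage, is enough to conclude the claimed weak convergence for the iterated limit.
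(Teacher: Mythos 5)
Your proposal is correct, and the outer limits ($n\to\infty$ first, then $t\to\infty$; and the final $n\to\infty$ via Marchenko--Pastur) coincide with the paper's argument. Where you genuinely diverge is in the inner limit $t\to\infty$ at fixed $n$. The paper works at the level of the eigenvalue SDE: it invokes ergodicity results for the Wishart particle system (\cite{JK}) to get convergence in law of $(\lambda^{1,n}_t,\dots,\lambda^{n,n}_t)$ to the stationary Gibbs measure (\ref{eq:statdistribreal}), and then identifies that Gibbs measure with the eigenvalue law of a Gaussian Wishart matrix via \cite[Proposition 7.4.1]{Pastur:2623028} before applying Marchenko--Pastur. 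You instead stay at the matrix level, using the explicit Ornstein--Uhlenbeck representation $M_t=e^{-\gamma t}M_0+\kappa\int_0^te^{-\gamma(t-s)}dW_s$, the $L^2$ decay of the initial term under the integrability hypothesis, Slutsky, and continuity of the singular-value map to conclude $\nu^n_t\to\nu^n_\infty$ in distribution. Your route is more elementary and self-contained (no ergodicity theory, no identification of the Gibbs density), and it makes transparent why the hypothesis $\sum_i\mathbb{E}[\lambda^{i,n}_0]<\infty$ is exactly what is needed to kill the initial condition. What the paper's route buys is generality: the SDE-level ergodicity argument does not require the particle system to come from a matrix model, which is why the same proof is reused verbatim for the general $W(\beta_1,1)$ case in Theorem \ref{th:limcommutW}, where no Ornstein--Uhlenbeck matrix representation with the right constants is available for all $\beta_1$. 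Your closing caveat about the meaning of the inner limit (convergence in distribution of random measures, composed with convergence in probability to a deterministic limit) is well taken and is in fact glossed over by the paper as well; making the continuous-mapping and composition steps explicit would only strengthen the write-up.
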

 
 \paragraph{}
The SDE (\ref{eds}) is related to the $\beta$-\emph{Wishart} (or  $\beta$-\emph{Laguerre}) process, the system of particles defined by the SDE
\begin{equation*}
    d\lambda^{i,n}_t=2 \sqrt{\lambda^{i,n}_t}dB^i_t-2\eta\lambda^{i,n}_tdt+ \delta dt +\beta\sum_{j\neq i}\frac{\lambda^{i,n}_t+\lambda^{j,n}_t}{\lambda^{i,n}_t-\lambda^{j,n}_t}dt \text{ for all }i\in\{1,\dots n\}
\end{equation*}
\[0\leq\lambda^{1,n}_t<\dots<\lambda^{n,n}_t\text{ a.s. dt-a.e.}\]
with $\beta,\delta,\eta>0$ and which, for $\beta=1$, $\eta=m\gamma/\kappa^2$, $\delta=m$, and after the change of time $t\rightarrow \frac{\kappa^2}{m}t$, corresponds to the dynamics of the process followed by the eigenvalues of $\frac{1}{m}M_tM_t^*$. Sometimes in the literature, the $\beta$-\emph{Wishart} process is considered with $\eta=0$ (see for instance \cite{Demni}) or with $\eta=1/2$ (see for instance \cite{trinh2020betalaguerre}). Sometimes, the $\beta$-Wishart process is also found in the literature as the system of particles given by the SDE
\begin{equation*}
    d\lambda^{i,n}_t=2 \sqrt{\lambda^{i,n}_t}dB^i_t-2\eta\lambda^{i,n}_tdt+ \beta\left(\delta +\sum_{j\neq i}\frac{\lambda^{i,n}_t+\lambda^{j,n}_t}{\lambda^{i,n}_t-\lambda^{j,n}_t}\right)dt \text{ for all }i\in\{1,\dots n\}
\end{equation*}
\[0\leq\lambda^{1,n}_t<\dots<\lambda^{n,n}_t\text{ a.s. dt-a.e.,}\]
as for instance in \cite{Allez}.

The reader will find in  Section \ref{sect:generalwishart} an extension of Theorems \ref{th:mfl} and \ref{th:limcommut} to the $\beta$-Wishart case.

\paragraph{}
Similar problems where tackled before, often by the direct study of the limit of empirical measure processes of a system of interacting diffusive particles. In \cite{MR1217451}, the authors studied the empirical measure of diffusing particles with electrostatic repulsion and a negative linear elastic force. They proved that the accumulation points  of the sequence of empirical measures of the particles sequence  (the sequence being indexed by the number of particles)  satisfy an integro-differential equation. The method of their proof was used in other papers and is reproduced here in our context. They proved the uniqueness of the limit by translating the integro-differential equation problem into a complex partial differential equation (PDE) problem, a Burgers equation, which they could solve using the method of characteristics.

In \cite{MR1440140} was studied a more general system of diffusing particles with electrostatic repulsion, a linear elastic force, and an additional constant drift. In this case, an integro-differential equation for the limit of the empirical measure of the particles was found and translated into a Burgers-like complex PDE using the  methods developed in \cite{MR1217451}. However, the existence and uniqueness of the solution to the PDE were proved using Fourier transforms methods.  A second article from the same authors \cite{MR1875671} followed, studying Brownian particles with electrostatic repulsion on the circle. There, the same procedure allowed the authors to get  another Burgers-like PDE of order two,  the existence and uniqueness of a solution to which were this time proved using the Hopf-Cole transformation. This allowed the authors to find an explicit expression of the limit measure flow when each particle starts from the position $0$, i.e. when the initial condition of the empirical measure is $\delta_0$.

More recently, these questions regained in interest and two articles, \cite{song2019high} and \cite{maecki2019universality} tackled more general problems. They got interested into the empirical (spectral) measure of the first introduced in \cite{graczyk2011multidimensional} general matrix valued stochastic differential equations on $\mathcal{H}_n$, the space of Hermitian $n\times n$ matrices, of the form
\begin{eqnarray*}
 dX_t=g(X_t)dW_t h(X_t)+h(X_t)d(W_t)^\dagger g(X_t)+\frac{1}{n}b_n(X_t)dt\/,\quad X_0\in \mathcal{H}_n\/,
\end{eqnarray*}
where $\dagger$ is the conjugate transpose operator, and where the continuous functions $g,h,b_n:\mathbb{R}\to\mathbb{R}$ act spectrally on $X_t$, i.e. if $X\in\mathcal{H}_n$, $U$ unitary and $D$ diagonal are such that $X=U^\dagger DU$, then $g$ is identified with the map $\mathcal H_n\ni X\mapsto U^\dagger\operatorname{Diag}(g(D_{1,1}),\cdots,g(D_{n,n}))U$. Here $W=(W_t)_t$ stands for a $n\times n$ complex-valued Brownian motion, i.e. the matrix valued process with entries being independent one-dimensional complex-valued Brownian motions. This general formulation encompasses the $\beta$-Wishart (or $\beta$-Laguerre) model and  the $\beta$-Jacobi model, as it is stated in \cite{graczyk2011multidimensional}. After proving the tightness of the family of measure valued processes, these two papers use a \cite{MR1217451}-like method to  get  the integro-differential equation that is  satisfied by the accumulation points of the measure valued processes family. In \cite{maecki2019universality}, the more general of the two articles, the authors derive existence and uniqueness of the solution to the integro-differential equation, and thus the convergence of the empirical measure valued process when $n\rightarrow\infty$ using the fact that the moments of the limit measure valued flow are uniquely determined by the integro-differential equation. Moreover, in some peculiar cases, they give an explicit expression of the limit, but only for the initial condition  $X_0=0$. In \cite{song2019high}, rather than using a moment approach, they translate as in \cite{MR1217451} the integro-differential equation problem into a Burgers equation, which they do not solve in general, but only for the initial condition $X_0=0$, using scaling properties of the initial SDE problem. For this initial condition, the Marcenko-Pastur distribution appears naturally in the expression of the measure valued flow limit.

The $\beta$-Wishart (or $\beta$-Laguerre) problem was also tackled in \cite{Allez}. They compute there in the  high temperature regime (when $\beta n\underset{n\rightarrow \infty}{\longrightarrow}2c\in(0,\infty)$)  the limit integro-differential equation, parametrized by $c$, the stationary probability measure of this equation, and remark that this limit distribution interpolates the Marcenko-Pastur distribution on the  $c\rightarrow+\infty$ limit and the Gamma distribution on the  $c\rightarrow0$ limit.

In \cite{trinh2020betalaguerre} and \cite{trinh2020betajacobi} were studied the same questions in the peculiar cases of respectively $\beta$-Wishart (or $\beta$-Laguerre) and $\beta$-Jacobi processes using a moment based method as in \cite{maecki2019universality} to prove the uniqueness the limit measure process. The authors moreover compute, in the   high temperature regime (when $\beta n\underset{n\rightarrow \infty}{\longrightarrow}2c\in(0,\infty)$), the long-time behaviour of the limit measure flow for both process families.

\paragraph{}
The interest of this note is to understand the limit behaviour of the empirical spectral measure of $\frac{1}{m}M_tM_t^*$ and $\sqrt{\frac{1}{m}M_tM_t^*}$ both in a real and a complex framework, using rectangular free probability tools. This point of view, more random matrix theory and free probability oriented,  sheds a new light on the problem of the limit behaviour of the empirical  measure process of $\beta$-Wishart particles for all $\beta>0$, and allows to compute the limit measure valued flow for any initial condition, and to recover the long-time behaviour of the limit measure valued flow.  For the sake of completeness, we also derive the integro-differential equation and the complex PDE approach of the problem. On top of that, an interest of Theorem  \ref{th:betamfW} is, by computing the Cauchy-Stieltjes transform of the $\mu^{n,W}$ (the reader will find a definition of this trasform in Section \ref{sect:preliminaries} and a definition of this probability measure in Section \ref{sect:generalwishart}), to provide a numerical approach in solving complex Burgers partial differential equations of the form \eqref{PDEBurgers}.
\paragraph{}
The note is organized the following way. After the Introduction where the main results are given, the complex version of these results are given in Section 2. The reader will find in Section 3 an application of Theorem \ref{th:mfl} to the general $\beta$-Wishart processes. Preliminary results about the Cauchy-Stieltjes transform and about the free and the rectangular free convolution are given in Section 4, and the results are proven in Section 5.
\paragraph{Acknowledgement} : I thank Djalil Chafai and Benjamin Jourdain for numerous fruitful discussions.

\section{The complex framework}
\label{sect:complex}

Theorem \ref{th:mfl} in fact true in a complex framework, i.e. for $(M_t)_{t\ge0}$ being a complex valued matrix process, following the SDE
\[dM_t=\kappa dW_t-\gamma M_tdt,\] with $W$ a $n\times m$ matrix filled with  independent complex Brownian motions (of the form $(B^1_t+\mathrm{i}B^2_t)_{t\geq0}$ where $(B^1_t)_{t\geq0}$ and $(B^2_t)_{t\geq0}$ are independent real Brownian motions), $M_0$ a complex valued random matrix. In this Section, we keep the definitions of the  $(\lambda^{1,n}_t,\dots, \lambda^{n,n}_t)_t$ and $(x^{1,n}_t,\dots, x^{n,n}_t)_t$ as the eigenvalues of respectively $\frac{1}{m}M_tM_t^*$ and    $\sqrt{\frac{1}{m}M_tM^*_t}$   but with the symbol $*$ denoting the conjugate transpose  rather than the transpose operator.

\begin{theorem}[Complex case]
\label{th:mflcomplex}
Theorem \ref{th:mfl} applies in this framework with for all $t\geq0$ :
  \begin{equation*}
      \sigma_t^2 =\left\{
      \begin{aligned}
      &\frac{\kappa^2}{\gamma}(1-e^{-2\gamma t}) \quad\text{if}\quad \gamma\neq0\\
      &2\kappa^2t \quad\text{if}\quad \gamma=0.
      \end{aligned}
      \right.
  \end{equation*}
  and with $\sigma_\infty = \sqrt{\frac{\kappa^2}{\gamma}}$.
\end{theorem}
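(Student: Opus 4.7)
The plan is to recognize the complex framework as a minor variant of the real one: every term in the proof of Theorem \ref{th:mfl} arising from a quadratic variation of the driving noise picks up an additional factor of $2$, and nothing else changes. I will first redo Bru's It\^o computation in the complex setting to derive the analogue of (\ref{eds}), then transport the proof of Theorem \ref{th:mfl} from Section 5 verbatim to the modified SDE and track how the substitution $\kappa^2\mapsto 2\kappa^2$ propagates into the formula for $\sigma_t^2$.

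For the first step, if $W$ is an $n\times m$ matrix of i.i.d.\ complex Brownian motions $W_{ij}=B^{(1)}_{ij}+\mathrm{i}B^{(2)}_{ij}$, then the only change in the It\^o expansion of $H_t:=\frac{1}{m}M_tM_t^*$ (with $*$ now the conjugate transpose) is that the quadratic covariation $d[W_{ij},\overline{W_{ij}}]_t = 2\,dt$ replaces its real counterpart $dt$. Repeating Bru's derivation yields
\[
d\lambda^{i,n}_t = \frac{2\sqrt{2}\,\kappa}{\sqrt{m}}\sqrt{\lambda^{i,n}_t}\,dB^i_t + 2\kappa^2\,dt - 2\gamma\lambda^{i,n}_t\,dt + \frac{2\kappa^2}{m}\sum_{j\neq i}\frac{\lambda^{i,n}_t+\lambda^{j,n}_t}{\lambda^{i,n}_t-\lambda^{j,n}_t}\,dt,
\]
i.e.\ equation (\ref{eds}) with $\kappa^2$ replaced by $2\kappa^2$ in every coefficient stemming from the noise (the $-2\gamma\lambda^{i,n}_t\,dt$ drift, which comes from the mean-reverting term $-\gamma M_t$, is unchanged).

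For the second step, the proof of Theorem \ref{th:mfl} identifies $\mu_t$ as $(e^{-\gamma t}\mu_0)\boxplus_\alpha\sqrt{\mu^{\mathrm{MP}_{\alpha,\sigma_t}}}$. Its three inputs are the limiting singular-value distribution of the Gaussian-like driving part (which is the Marcenko-Pastur law with the appropriate scale), the asymptotic $\boxplus_\alpha$-freeness between the initial condition and the noise contribution, and the deterministic exponential contraction $e^{-\gamma t}$. None of these ingredients distinguishes real from complex Gaussian ensembles; the only quantity that changes is the variance of each entry of $M_t$, which is doubled. Hence the formula for $\sigma_t^2$ of Theorem \ref{th:mfl} becomes $\frac{2\kappa^2}{2\gamma}(1-e^{-2\gamma t}) = \frac{\kappa^2}{\gamma}(1-e^{-2\gamma t})$ when $\gamma\neq 0$ and $2\kappa^2 t$ when $\gamma=0$, with $\sigma_\infty=\sqrt{\kappa^2/\gamma}$.

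The main (and fairly minor) difficulty is to verify that the rectangular free convolution machinery invoked in Section 5, in particular Benaych-Georges' universal limit theorems for singular values of Ginibre-type matrices, applies equally in the complex case. This should cause no trouble, since those results are stated for both real and complex Gaussian ensembles and the universality of the rectangular Marcenko-Pastur law is well established in both frameworks.
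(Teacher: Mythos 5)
Your proposal follows exactly the route the paper intends (its proof of this theorem is literally ``mimics the proof of Theorem \ref{th:mfl}''): the Ornstein--Uhlenbeck decomposition $M_t=M_0e^{-\gamma t}+\kappa\int_0^te^{\gamma(s-t)}dW_s$, the Marcenko--Pastur limit for the bi-unitarily invariant Gaussian part, and Theorem \ref{th:rectfreeconv}, with the only change being that $\mathbb{E}\bigl[|W_{ij}(t)|^2\bigr]=2t$ doubles the entry variance and hence yields $\sigma_t^2=\frac{\kappa^2}{\gamma}(1-e^{-2\gamma t})$. One correction to an auxiliary step: in your complex analogue of (\ref{eds}) the martingale coefficient should remain $\frac{2\kappa}{\sqrt{m}}\sqrt{\lambda^{i,n}_t}$, not $\frac{2\sqrt{2}\,\kappa}{\sqrt{m}}\sqrt{\lambda^{i,n}_t}$; taking traces, the quadratic variation of the martingale part of $\frac{1}{m}\mathrm{tr}(M_tM_t^*)=\sum_i\lambda^{i,n}_t$ equals $\frac{4\kappa^2}{m}\sum_i\lambda^{i,n}_t\,dt$ in both the real and the complex case, which matches the SDE displayed in Section \ref{sect:complex}. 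This slip is harmless for the theorem itself, since the free-convolution argument uses only the matrix decomposition and never the eigenvalue SDE.
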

Let us note that in this context, the eigenvalues $(\lambda^{1,n}_t,\dots, \lambda^{n,n}_t)_t$ follow the SDE (see \cite{MR1871699}) : 

\begin{equation*}
    d\lambda^{i,n}_t=\frac{2\kappa }{\sqrt{m}}\sqrt{\lambda^{i,n}_t}dB^i_t+2\kappa^2dt-2\gamma\lambda^{i,n}_tdt+\frac{2\kappa^2 }{m}\sum_{j\neq i}\frac{\lambda^{i,n}_t+\lambda^{j,n}_t}{\lambda^{i,n}_t-\lambda^{j,n}_t}dt \text{ for all }i\in\{1,\dots n\}
\end{equation*}
\[0\leq\lambda^{1,n}_t<\dots<\lambda^{n,n}_t\text{ a.s. dt-a.e.},\]
where $B^1,\dots, B^n$ are independent Brownian motions. It corresponds to a $\beta$-Wishart process with $\beta=2$, or to a $W(2,1)$ process, which is defined in the next Section.

\section{Application to more general Wishart processes}
\label{sect:generalwishart}

For $\beta_1,\beta_2>0$, let us consider the changed of time \emph{general Wishart process} defined by the SDE :
 \begin{equation}
  \label{eq:general}
       d\lambda^{i,n,W}_t=\frac{2\kappa }{\sqrt{m}}\sqrt{\lambda^{i,n,W}_t}dB^i_t-2\gamma\lambda^{i,n,W}_tdt+\beta_1\kappa^2\left(1+\frac{\beta_2}{m}\sum_{j\neq i}\frac{\lambda^{i,n,W}_t+\lambda^{j,n,W}_t}{\lambda^{i,n,W}_t-\lambda^{j,n,W}_t}\right)dt \text{ for all }i\in\{1,\dots n\}
  \end{equation}
  \[0\leq\lambda^{1,n,W}_t<\dots<\lambda^{n,n,W}_t\text{ a.s. dt-a.e.},\]
where $B^1,\dots, B^n$ are independent real Brownian motions. We will refer to this SDE by $W(\beta_1,\beta_2)$. As stated in the introduction, the classical $\beta$-Wishart processes found in the literature are of the form $W(1,\beta)$ or $W(\beta,1)$ (after the change of variables $t\mapsto\frac{m}{\kappa^2}t$). We also remark that the SDE (\ref{eds}) corresponds to $W(1,1)$. Results from \cite{MR3296535} and  \cite{JK} together show (after applying the change of variables $t\mapsto\frac{m}{\kappa
^2}t$ to the SDE (\ref{eq:general})) that this SDE admits a strong pathwise unique solution defined on $\mathbb{R}_+$ as soon as $m-(n-1)\beta_2>0$ and 
\begin{itemize}
    \item $\beta_1\beta_2\geq1$ ;
    \item or if $0<\beta_1\beta_2<1$ when $m\beta_1+(2-n)\beta_1\beta_2\geq1$.
\end{itemize}

Let us note that the long time behaviour of such a process was studied before : the change of variables $t\mapsto\frac{m}{\kappa^2}t$ in the SDE (\ref{eq:general}) coupled with the results \cite[Lemma 3.1 and Proposition 2.8]{JK}  prove that, given integrable initial conditions, i.e. $\sum_{i=1}^n\mathbb{E}[\lambda^{i,n,W}_0]<+\infty$, the  distribution of $(\lambda^{1,n,W}_t,\dots,\lambda^{n,n,W}_t)$  converges weakly when $t\rightarrow+\infty$ to a unique stationary probability measure with density with respect to the Lebesgue measure 
\begin{equation}
    \label{statdistrib}
    (\lambda^1,\dots,\lambda^n) \mapsto \frac{1}{\mathcal{Z}}\prod_{i=1}^n\left((\lambda^i)^{\frac{\beta_1m-(n-1)\beta_1\beta_2}{2}-1}e^{-\frac{m\gamma}{\kappa^2}\lambda^i}\prod_{j\neq i}|\lambda^j-\lambda^i|^{\beta_1\beta_2/2}\right)\mathds{1}_{0\leq\lambda^1\leq\dots\leq\lambda^n}, 
\end{equation}
where $\mathcal{Z}$ is a normalizing constant. For $\beta_2=1$, this Gibbs measure is the $\beta$-Laguerre ensemble, on real symmetric matrices for $\beta_1=1$, on complex hermitian matrices for $\beta_1=2$ and on quaternion self-dual matrices for $\beta_1=4$, see for instance \cite{MR2641363}. This distribution is moreover related to the distribution of the singular values  of $n\times m$ random matrix with independent identically distributed real (for $\beta_1=1$) or complex (for $\beta_2=2$) Gaussian entries. 

It is a well known fact that, for $\beta_2=1$, if $(\lambda^1,\dots,\lambda^n)$ is a random vector distributed according to (\ref{statdistrib}), then 
\begin{equation}
    \label{prop:stat}
    \nu^{n,W}=\frac{1}{n}\sum_{i=1}^n\delta_{\lambda^i}\underset{\underset{n/m\rightarrow\alpha}{n\rightarrow\infty}}{\longrightarrow}\mu^{MP_{\alpha,\sigma_\infty}},
\end{equation}
with $\sigma_\infty=\sqrt{\frac{\beta_1\kappa^2}{2\gamma}}$,
see for instance \cite[Theorem 5.5.7]{hiaipetz}.

Let us define for all $t\geq0$ : \[\nu^{n,W}_t=\frac{1}{n}\sum_{i=1}^n\delta_{{\lambda^{i,n,W}_t}},\qquad \mu^{n,W}_t=\sqrt{\nu^{n,W}_t}=\frac{1}{2n}\sum_{i=1}^n\left(\delta_{\sqrt{\lambda^{i,n,W}_t}}+\delta_{-\sqrt{\lambda^{i,n,W}_t}}\right).\]

The next result tackles the limit  of the sequence $(\nu_t^{n,W})_n$. As written in the introduction, it is a peculiar case of the results of \cite{maecki2019universality} reproduced here partly for the sake of completeness. The limit of the empirical measure problem is transformed into a complex PDE problem, a complex Burgers equation, to which the Cauchy-Stieltjes transform of the limit measure process, if it exists, must be solution.

Definitions and properties about the Cauchy-Stieltjes transform are given in Section 3.

\begin{theorem}[Complex Burgers and mean field limit : weak formulation]
\label{th:burgers}
Let us assume that the measure valued sequence $(\nu_0^{n,W})_n$ converges weakly in probability when $n\rightarrow+\infty$ with $\frac{n}{m}\rightarrow\alpha\in(0,1]$ to a limit probability measure denoted by $\nu^W_0$ and that
\[\underset{n}{\sup}\int x^8\nu^{n,W}_0(dx)<\infty.\]
Then the family $\{(\nu^{n,W}_t)_{t\ge0},n\in\mathbb{N}^*\}$ is tight and any limiting measure valued flow when $n$ goes to infinity with $\frac{n}{m}\rightarrow\alpha\in(0,1]$ satisfies for  all twice continuously differentiable real test function $f$ the  equation : 
 
\begin{equation}
    \langle \nu_t,f\rangle =  \langle \nu_0,f\rangle + \int_0^t\langle\nu_s,(\beta_1\kappa^2-2\gamma\Phi) f'\rangle ds + \frac{\alpha\beta_1\beta_2\kappa^2 }{2}\int_0^t\left(\iint(x+y)\frac{f'(x)-f'(y)}{x-y}\nu_s(dx)\nu_s(dy)\right)ds\label{closedequ}
\end{equation} 
where $\Phi:x\mapsto x$ and with the convention $\frac{f'(x)-f'(y)}{x-y}=f''(x)$ when $x=y$. If $\nu_0$ admits a characteristic function, and if this function is analytic on a neighborhood of the origin, then this equation admits a unique solution.

Moreover, if $(\nu_t)_{t\ge0}$ is a solution to equation (\ref{closedequ}) and if for all $t\geq0$, $G_t$ is the Cauchy-Stieltjes transform of $\nu_t$, then $G$ satisfies the complex Burgers PDE
\begin{equation}
\label{PDEBurgers}
    \begin{split}
        \frac{\partial}{\partial t}G_t(z) &=(\alpha\beta_1\beta_2\kappa^2 -\beta_1\kappa^2+2\gamma z)\frac{\partial}{\partial z}G_t(z) - 2\alpha\beta_1\beta_2\kappa^2  zG_t(z)\frac{\partial}{\partial z}G_t(z)-\alpha\beta_1\beta_2\kappa^2  G_t^2(z)+2\gamma G_t(z),\\
        G_0(z)&=\int\frac{\nu_0^W(dv)}{z-v}=\phi(z).
    \end{split}
\end{equation}
\end{theorem}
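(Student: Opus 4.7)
My approach is the McKean--Vlasov-type analysis used in \cite{MR1217451} and adapted to matrix-valued SDEs in \cite{maecki2019universality, song2019high}: apply Itô's formula to $\langle \nu^{n,W}_t, f\rangle$ for a large enough class of test functions $f$, deduce tightness from the resulting semi-martingale decomposition, identify every accumulation point as a solution of (\ref{closedequ}), and finally specialize $f$ to recover (\ref{PDEBurgers}). For $f\in C^2$ of polynomial growth, Itô's formula applied to each $\lambda^{i,n,W}_t$ using (\ref{eq:general}) yields
\[
\langle \nu^{n,W}_t, f\rangle = \langle \nu^{n,W}_0, f\rangle + \int_0^t \langle \nu^{n,W}_s, (\beta_1\kappa^2 - 2\gamma\Phi)f'\rangle\,ds + J^n_t(f) + R^n_t(f) + M^n_t(f),
\]
where $J^n$ collects the singular interaction drift, $R^n_t(f) = \tfrac{2\kappa^2}{nm}\int_0^t \sum_i \lambda^{i,n,W}_s f''(\lambda^{i,n,W}_s)\,ds$ is the Itô trace, and $M^n_t(f)$ is a martingale with $\langle M^n(f)\rangle_t = O(1/(nm))$. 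The $L^8$ hypothesis on $\nu_0^{n,W}$ propagates via Gronwall to $\sup_n \mathbb{E}\sup_{s\le T}\int x^8\,\nu^{n,W}_s(dx) < \infty$, which, combined with the above decomposition, yields tightness of $(\nu^{n,W})_n$ in $C(\mathbb{R}_+, \mathcal{P}(\mathbb{R}_+))$ via the Aldous--Rebolledo criterion.

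For an accumulation point $\nu$, each term passes to the limit: the linear drift by weak convergence, $M^n(f)\to0$ in $L^2$ by its quadratic variation bound, and $R^n_t(f)=O(1/n)\to0$. The interaction term is handled by antisymmetrizing $\frac{\lambda^i+\lambda^j}{\lambda^i-\lambda^j}$ in $(i,j)$:
\[
J^n_t(f) = \int_0^t \frac{\beta_1\beta_2\kappa^2\,n}{2m}\cdot\frac{1}{n^2}\sum_{i\neq j}(\lambda^i+\lambda^j)\frac{f'(\lambda^i)-f'(\lambda^j)}{\lambda^i-\lambda^j}\,ds,
\]
where the kernel extends continuously to the diagonal via $2xf''(x)$. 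Since $n/m\to\alpha$, the prefactor tends to $\tfrac{\alpha\beta_1\beta_2\kappa^2}{2}$, and weak convergence of $\nu^{n,W}_s\otimes\nu^{n,W}_s$ delivers the double integral in (\ref{closedequ}).

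For uniqueness, plug $f(x)=x^k$ into (\ref{closedequ}) and use $(x+y)(x^k-y^k)/(x-y) = \sum_{j=0}^{k-1}(x^{k-j}y^j + x^{k-1-j}y^{j+1})$ to obtain, for $m_k(t)=\int x^k\,d\nu_t$, an ODE $m_k'(t) = -2\gamma k\,m_k(t) + P_k(m_0,\dots,m_k)$ linear in $m_k$ and polynomial in lower moments. By induction on $k$ all moments are determined by $\nu_0$; the analyticity of the characteristic function gives $|m_k(0)|\le CR^k k!$-type control that propagates by Gronwall and ensures Carleman determinacy, hence uniqueness of $\nu_t$. To derive (\ref{PDEBurgers}), apply (\ref{closedequ}) with $f_z(x)=1/(z-x)$, $z\notin\mathbb{R}$ (justified by truncation using the moment bounds). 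Using $\int d\nu/(z-v)^2 = -G'$, $\int v\,d\nu/(z-v) = zG-1$, $\int v\,d\nu/(z-v)^2 = -zG'-G$, together with the factorization
\[
\frac{f_z'(x)-f_z'(y)}{x-y} = \frac{1}{(z-x)^2(z-y)} + \frac{1}{(z-x)(z-y)^2},
\]
the double integral collapses to $-4zGG'-2G^2+2G'$; summing all contributions reproduces exactly (\ref{PDEBurgers}).

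The main obstacle is the interaction term: the bare kernel $(\lambda^i+\lambda^j)/(\lambda^i-\lambda^j)$ is singular on the diagonal, so one must verify that after symmetrization the continuous extension $(x+y)\frac{f'(x)-f'(y)}{x-y}$ is compatible with the weak limit, i.e.\ that no mass concentrates asymptotically on the diagonal. Extending to unbounded test functions such as $f_z$ in the Burgers step further requires a careful approximation argument that leverages the uniform $L^8$ moment control.
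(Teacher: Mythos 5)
Your proposal follows essentially the same route as the paper: Itô's formula on $\langle\nu^{n,W}_t,f\rangle$, antisymmetrization of the singular interaction kernel into $(x+y)\frac{f'(x)-f'(y)}{x-y}$ with the diagonal handled by $f''$, passage to the limit using $n/m\to\alpha$, moment-recursion uniqueness (this is exactly Remark \ref{rmk:moments}, which the paper, like you, uses for determinacy), and the choice $f_z(v)=1/(z-v)$ to obtain (\ref{PDEBurgers}), where your identity $2G'-2G^2-4zGG'$ for the double integral agrees with the paper's computation. The only difference is one of delegation: the paper outsources tightness and the uniqueness proof to \cite{maecki2019universality}, whereas you sketch them (Gronwall moment propagation plus Aldous--Rebolledo, and Carleman determinacy), which is consistent with the cited arguments.
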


\begin{remark}[Moments dynamics]
\label{rmk:moments}
Let $\nu=(\nu_t)_t$ be a real measure valued flow verifying equation (\ref{closedequ}), and let us denote for all $k\in\mathbb{N}^*,t\geq0$ : $m^k_t=\mathbb{E}[ \nu_t^{k}]$. Then for all $t\ge0$:
\begin{enumerate}[label=(\roman*)]
    \item $m_t^1  =  \frac{\beta_1\kappa^2}{2\gamma}+\left(m_0^1-\frac{\beta_1\kappa^2}{2\gamma}\right)\mathrm{e}^{-2\gamma t}$;
    \item $ m_t^2=\frac{1}{4\gamma^2}\Big[(\beta_1^2\kappa^4+(\alpha \beta_1\beta_2\kappa^2 -4\gamma m_0^1)\beta_1\kappa^2+4\gamma^2m_0^2-4\alpha \beta_1\beta_2\kappa^2 \gamma m_0^1)\mathrm{e}^{-4\gamma t}$
    
    $\qquad\qquad+4\beta_1\kappa^2(\beta_2\alpha  +1)\left(\left(\gamma m_0^1-\frac{\beta_1\kappa^2}{2}\right)\mathrm{e}^{-2\gamma t}+\frac{\beta_1\kappa^2}{4}\right)\Big]$;
    \item $\forall k\geq3,$ $\frac{\mathrm{d}}{\mathrm{dt}} m_t^k=k\beta_1\kappa^2 m_t^{k-1}-2k\gamma m_t^{k}+\frac{k\alpha\beta_1\beta_2 \kappa^2 }{2}\sum_{j=0}^{k-2}\left( m_t^{j+1} m_t^{k-2-j}+m_t^{j} m_t^{k-j-1}\right).$
\end{enumerate}
These equations are used in \cite{maecki2019universality, trinh2020betalaguerre} to prove existence and uniqueness of the solution to the integro-differential equation (\ref{closedequ}).
\end{remark}
\begin{remark}
The PDE (\ref{PDEBurgers}) satisfies the assumptions of the Cauchy-Kowalevski Theorem, see for instance \cite[Theorem 1.25]{folland1995introduction}. However, this theorem only gives local existence and uniqueness of an analytic solution (both in space and time) to the PDE.  We did not manage to expand this approach to conclude to global existence and uniqueness of a solution.
\end{remark}

The next Proposition gives results on the PDE (\ref{PDEBurgers}) using a different method than the ones used in \cite{maecki2019universality}. 
\begin{proposition}[Marcenko Pastur stability along the Burgers PDE and stationarity]
\label{propoburgers} 
\ 
\begin{enumerate}[label=(\roman*)]
%    \item The PDE (\ref{PDEBurgers}) admits a unique analytic solution on $\mathbb{R}_+\times\mathbb{C}_+$ taking its value in $\mathbb{C}$.
    \item Let $\rho_0,\sigma_0\in\mathbb{R}_+$. Let us assume $\kappa\neq0$ and that $\nu_0^W=\underset{n\rightarrow\infty}{\lim}\nu_0^{n,W}$ follows a Marcenko-Pastur distribution of shape parameter $\rho_0$ and of scale parameter $\sigma_0$.
    
    Then, the PDE (\ref{PDEBurgers}) admits a solution which is the Cauchy-Stieltjes transform of a Marcenko-Pastur distribution  for all $t>0$ if and only if $\rho_0=\beta_2\alpha$. In this case, this solution can be written as the Cauchy-Stieltjes transform of the Marcenko-Pastur distribution $\mu^{MP_{\beta_2\alpha,\sigma(t)}}$ with
    \begin{equation*}
        \sigma:t\in\mathbb{R}_+\mapsto \left\{
        \begin{aligned}
            &\sqrt{\left(\sigma^2_0-\frac{\beta_1\kappa^2}{2\gamma}\right)\mathrm{e}^{-2\gamma t}+\frac{\beta_1\kappa^2}{2\gamma}} \quad\text{if}\quad \gamma\neq0\\
            &\sqrt{\sigma_0^2+\beta_1\kappa^2t} \quad\text{if}\quad \gamma=0
        \end{aligned}
        \right.
    \end{equation*}
    \item Let us assume $\gamma\neq0$ and let $\sigma_0=\frac{\sqrt{\beta_1}\kappa}{\sqrt{2\gamma}}$. Then $G_{\mu^{MP_{\beta_2\alpha,\sigma_0}}}$ is the unique stationary solution to the equation  (\ref{PDEBurgers}) corresponding to the Cauchy-Stieltjes transform of a probability measure on $\mathbb{R}$. Moreover, under the assumptions of Theorem \ref{th:burgers} and if $\nu_0=\mu^{MP_{\beta_2\alpha,\sigma_0}}$, then for all $t\geq0$, $\nu^W_t=\underset{n\rightarrow\infty}{\lim}\nu_t^{n,W}=\mu^{MP_{\beta_2\alpha,\sigma_0}}$.
\end{enumerate}
\end{proposition}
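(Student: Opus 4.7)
The key preliminary observation is that the PDE (\ref{PDEBurgers}) can be rewritten in conservation-law form as
\[\partial_t G_t(z)=\partial_z\bigl[2\gamma z G_t(z)-\alpha\beta_1\beta_2\kappa^2 z G_t(z)^2-\beta_1\kappa^2(1-\alpha\beta_2)G_t(z)\bigr],\]
which I would verify by simply expanding the $\partial_z$ on the right-hand side and using $z\partial_z G=\partial_z(zG)-G$ and $2zG\partial_z G=\partial_z(zG^2)-G^2$. Both parts of the proposition then rest on this identity together with the defining algebraic relation $\rho\sigma^2 zG^2+(\sigma^2(1-\rho)-z)G+1=0$ for the Cauchy-Stieltjes transform of $\mu^{MP_{\rho,\sigma}}$.

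For (i), I would plug the ansatz $G_t=G_{\mu^{MP_{\rho(t),\sigma(t)}}}$ into (\ref{PDEBurgers}). Writing $\tau=\rho\sigma^2$ and $\omega=\sigma^2(1-\rho)$, implicit differentiation of $\tau zG_t^2+(\omega-z)G_t+1=0$ in $z$ and in $t$ yields closed-form rational expressions for $\partial_z G_t$ and $\partial_t G_t$ with common denominator $D=2\tau zG_t+\omega-z$. Substituting into (\ref{PDEBurgers}), clearing $D$, and using the algebraic relation to eliminate the $G_t^3$ terms that appear, I expect to reduce everything to a polynomial identity in $z$ and $G_t$. Matching the coefficients of $zG_t^2$, $G_t^2$, and $G_t$ separately gives an ODE $\dot\tau=\alpha\beta_1\beta_2\kappa^2-2\gamma\tau$, a purely algebraic constraint $\alpha\beta_1\beta_2\kappa^2\,\omega=\tau\beta_1\kappa^2(1-\alpha\beta_2)$, and an ODE $\dot\omega=\beta_1\kappa^2(1-\alpha\beta_2)-2\gamma\omega$. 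Rewriting the algebraic constraint in the $(\rho,\sigma^2)$ variables collapses it to $\rho=\alpha\beta_2$, so the shape parameter must stay constant at $\alpha\beta_2$ along the flow; this forces $\rho_0=\alpha\beta_2$ and supplies the ``only if'' direction. The two remaining ODEs then both reduce to $\dot{\sigma^2}=\beta_1\kappa^2-2\gamma\sigma^2$, whose explicit integration produces the claimed expression for $\sigma(t)$.

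For (ii), stationarity $\partial_t G=0$ combined with the conservation-law form gives $\partial_z[\,2\gamma zG-\alpha\beta_1\beta_2\kappa^2 zG^2-\beta_1\kappa^2(1-\alpha\beta_2)G\,]=0$. Integrating once in $z$ and using $G(z)\sim1/z$ at infinity (so $zG\to1$, $zG^2\to0$, $G\to0$) fixes the integration constant at $2\gamma$, leading to the quadratic
\[\alpha\beta_1\beta_2\kappa^2 zG^2-\bigl(2\gamma z-\beta_1\kappa^2(1-\alpha\beta_2)\bigr)G+2\gamma=0.\]
Dividing by $2\gamma$ and comparing with the MP quadratic, the system $\rho\sigma^2=\alpha\beta_1\beta_2\kappa^2/(2\gamma)$, $\sigma^2(1-\rho)=\beta_1\kappa^2(1-\alpha\beta_2)/(2\gamma)$ is solved by summing the two equations, which gives $\sigma^2=\beta_1\kappa^2/(2\gamma)=\sigma_0^2$ and hence $\rho=\alpha\beta_2$. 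Of the two algebraic branches, only one is a Cauchy-Stieltjes transform of a probability measure on $\mathbb{R}$ (the one analytic on $\mathbb{C}\setminus\mathbb{R}$ with the correct asymptotic at infinity), so the stationary solution is unique. For the \emph{moreover} statement, applying (i) with $\rho_0=\alpha\beta_2$ and $\sigma_0=\sqrt{\beta_1\kappa^2/(2\gamma)}$ yields $\sigma(t)\equiv\sigma_0$: the Marcenko-Pastur distribution is preserved along the flow, and the uniqueness part of Theorem \ref{th:burgers} identifies this constant flow with the $n\to\infty$ limit.

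The main obstacle will be the coefficient-matching step of part (i). Although each manipulation is mechanical, the interplay between $z$-dependent and $z$-independent pieces produced when reducing modulo the MP algebraic relation requires careful bookkeeping to make sure no term is lost when collecting coefficients in the basis $\{zG_t^2,G_t^2,G_t\}$; the conservation-law viewpoint should however make this basis the natural one, and once all three scalar equations have been extracted the rest of the argument is algebraic.
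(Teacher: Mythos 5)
Your proposal is correct, and part (ii) is essentially the paper's argument (integrate the stationary equation once, fix the constant $C=2\gamma$ from $zG\to1$, select the branch that is a genuine Cauchy--Stieltjes transform, then invoke the uniqueness of Theorem \ref{th:burgers} for the \emph{moreover} clause). Part (i), however, takes a genuinely different computational route. The paper works with the explicit closed form of $G_{\mu^{MP_{\rho,\sigma}}}$, computes $\partial_t G$, $\partial_z G$, $G^2$, $G\partial_z G$ with their square-root terms, assembles everything into a single expression $H(z,t)=0$, and extracts the three scalar equations ($\dot\rho=0$, the constraint forcing $\rho=\beta_2\alpha$, and $\dot{\sigma^2}=\beta_1\kappa^2-2\gamma\sigma^2$) by expanding $H$ in powers of $1/z$ as $|z|\to\infty$ and matching the coefficients of $1$, $1/z$, $1/z^2$. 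You instead exploit the quadratic relation $\rho\sigma^2 zG^2+(\sigma^2(1-\rho)-z)G+1=0$ together with the conservation-law form of (\ref{PDEBurgers}) (which is a correct and useful observation not made in the paper), obtain $\partial_zG$ and $\partial_tG$ by implicit differentiation, and reduce the PDE to a polynomial identity. Your anticipated difficulty is real but benign: $\{zG^2,G^2,G\}$ are not independent over the rational functions of $z$, so one must first eliminate $G^2$ via the quadratic relation and then match polynomial coefficients in the basis $\{1,G\}$; doing so yields exactly your system $\dot\tau=\alpha\beta_1\beta_2\kappa^2-2\gamma\tau$, $\alpha\beta_2\omega(1-\alpha\beta_2)^{-1}=\tau$ (i.e.\ $\rho=\alpha\beta_2$), and $\dot\omega\tau=\dot\tau\omega$, which is equivalent to what you state when $\kappa\neq0$. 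Your method buys a purely polynomial computation with no square roots and no asymptotic expansion, at the cost of the extra reduction step modulo the quadratic; the paper's asymptotic expansion avoids that dependence issue because $1$, $1/z$, $1/z^2$ are trivially independent, but requires heavier explicit formulas. Both arguments share the same mild implicit assumption that $\rho(t),\sigma(t)$ are differentiable in $t$ for the ``only if'' direction.
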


To our knowledge, the assertion $(i)$ was not remarked yet in the past literature, and $(ii)$ can be seen as in the continuity of the results of \cite{Allez}. Indeed, their computations are  in the  high temperature regime (when $\beta n\underset{n\rightarrow \infty}{\longrightarrow}2c\in(0,\infty)$) so that the limit of their integro-differential equation and its stationary probability measure are parametrized by $c$,  and they  remark that this limit stationary probability measure converges to the Marcenko-Pastur distribution in the limit $c\rightarrow+\infty$.

The next result gives a complete answer to the limit of $(\mu^{n,W})_n$  problem, using free probability tools (in particular the rectangular free convolution defined in Section 3). It is the analog of Theorem \ref{th:mfl} in this context.

\begin{theorem}[Mean-field limit for the general Wishart process]
\label{th:betamfW}
Under the assumptions of Theorem \ref{th:burgers}, let us moreover assume $\beta_2\alpha\leq1$, that $(\lambda^{1,n,W}_t,\dots,\lambda^{n,n,W}_t)_t$ is defined on $\mathbb{R}_+$ and   that  $\mu_0^W=\sqrt{\nu_0^W}$. Let us moreover assume that $\nu_0^W$ has a characteristic function which is analytic on a neighborhood of the origin. Then for all $t\geq0$,
  ${(\mu^{n,W}_t)}_{n\geq0}$ converges weakly in probability and
  \[
   \mu^W_t:=\lim_{n\to\infty}\mu^{n,W}_t=(e^{-\gamma t}\mu_0^W)\boxplus_{\beta_2\alpha} \sqrt{\mu^{\mathrm{MP}_{\beta_2\alpha, \sigma_t}}}
  \]
  where $\boxplus_{\beta_2\alpha}$ is the rectangular free convolution of  parameter $\beta_2\alpha$, where $e^{-\gamma t}\mu_0^W$ denotes the  push-forward of $\mu_0^W$ by the map $v\mapsto e^{-\gamma t}v$,  and where for all $t\geq0$ :
  \begin{equation*}
      \sigma_t^2 =\left\{
      \begin{aligned}
      &\frac{\beta_1\kappa^2}{2\gamma}(1-e^{-2\gamma t}) \quad\text{if}\quad \gamma\neq0\\
      &\beta_1\kappa^2t \quad\text{if}\quad \gamma=0.
      \end{aligned}
      \right.
  \end{equation*}
 Moreover, if $\gamma\neq0$ and with $\sigma_\infty = \sqrt{\frac{\beta_1\kappa^2}{2\gamma}}$,
 \[\underset{t\rightarrow+\infty}{\lim}\mu^W_t = \sqrt{\mu^{\mathrm{MP}_{\beta_2\alpha,\sigma_\infty  }}}.\]

\end{theorem}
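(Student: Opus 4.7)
The plan is to use Theorem \ref{th:burgers} as a uniqueness result. That theorem already gives tightness of $\{(\nu^{n,W}_t)_{t\ge 0}\}_n$ and asserts that, under the analyticity hypothesis on the characteristic function of $\nu_0^W$, any limit point is the unique measure-valued flow whose Cauchy-Stieltjes transform solves the complex Burgers PDE (\ref{PDEBurgers}) with initial datum $\phi$. Hence it suffices to exhibit a candidate flow $(\nu^W_t)_t$ whose Cauchy-Stieltjes transform satisfies (\ref{PDEBurgers}) and which agrees with $\nu_0^W$ at $t=0$. Given the formula announced for $\mu^W_t$, the natural candidate is $\nu^W_t := (\mu^W_t)^2$, the push-forward of $\mu^W_t$ by $v\mapsto v^2$, so that $\mu^W_t=\sqrt{\nu^W_t}$. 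The equivalence between weak convergence of $(\nu^{n,W}_t)_n$ and of $(\mu^{n,W}_t)_n$ (justified in the introduction for $W(1,1)$ and extending verbatim to the general case) then transfers the final conclusion to $(\mu^{n,W}_t)_n$.

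The core algebraic step is the computation of the rectangular R-transform of the candidate, using the Benaych-Georges machinery recalled in Section 4. Setting $\lambda := \beta_2\alpha \in (0,1]$ and writing $C^\lambda$ for the rectangular R-transform of ratio $\lambda$, the defining linearization property reads $C^\lambda_{\mu \boxplus_\lambda \nu} = C^\lambda_\mu + C^\lambda_\nu$. The symmetrized square root of the Marchenko-Pastur law plays the role of the rectangular semicircle in this theory, and a direct computation from its Cauchy-Stieltjes transform yields $C^\lambda_{\sqrt{\mu^{\mathrm{MP}_{\lambda,\sigma}}}}(z) = \sigma^2 z$. Combining this with the homogeneity of $C^\lambda$ under the dilation $v\mapsto e^{-\gamma t}v$ gives the closed form
\[
C^\lambda_{\mu^W_t}(z) \;=\; C^\lambda_{\mu^W_0}\!\bigl(e^{-2\gamma t}z\bigr) + \sigma_t^2\, z.
\]

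To verify that the Cauchy-Stieltjes transform $G_t$ of $\nu^W_t$ solves (\ref{PDEBurgers}), one inverts the chain of transforms. For a symmetric measure $\mu^W_t$ one has $G_{\mu^W_t}(z) = z\, G_t(z^2)$, and $C^\lambda_{\mu^W_t}$ is a specific functional transform of $G_{\mu^W_t}$ involving a $\lambda$-dependent inversion. Substituting and differentiating in $t$ produces an evolution equation for $G_t$; the coefficient $\beta_1\kappa^2$ of (\ref{PDEBurgers}) appears from $\partial_t\sigma_t^2 = \beta_1\kappa^2 - 2\gamma\sigma_t^2$ and the coefficient $\alpha\beta_1\beta_2\kappa^2 = \lambda\,\beta_1\kappa^2$ arises from the $\lambda$-factor in the rectangular R-transform inversion. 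The main obstacle is precisely this composition of transforms: one must track the inverses in the Benaych-Georges framework carefully on a common domain of analyticity, verify that the assumption $\lambda\le 1$ is exactly what makes the rectangular convolution available, and match the resulting PDE term by term to (\ref{PDEBurgers}). Once this verification is complete, Theorem \ref{th:burgers} yields the stated convergence in probability. The long-time behaviour follows from continuity of $\boxplus_\lambda$ under weak convergence: for $\gamma\neq 0$, $e^{-\gamma t}\mu^W_0$ converges weakly to $\delta_0$, which is the neutral element for $\boxplus_\lambda$, and $\sigma_t \to \sigma_\infty$, whence $\mu^W_t \to \sqrt{\mu^{\mathrm{MP}_{\lambda,\sigma_\infty}}}$.
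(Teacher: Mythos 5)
Your strategy diverges from the paper's at the identification step, and the divergence opens a genuine gap. The paper never verifies analytically that the candidate flow solves the Burgers PDE. Instead, it introduces an auxiliary honest matrix Ornstein--Uhlenbeck model $dM_t=\sqrt{\beta_1}\kappa\,dW_t-\gamma M_t\,dt$ with aspect ratio $n/m\to\beta_2\alpha$: the Itô computation (\ref{eq:integrodifcalculus}) shows its empirical spectral measures satisfy the \emph{same} weak integro-differential equation (\ref{closedequ}) (the coefficients $\beta_1\kappa^2$ and $\alpha\beta_1\beta_2\kappa^2$ match after the substitutions $\kappa\mapsto\sqrt{\beta_1}\kappa$, $\alpha\mapsto\beta_2\alpha$), Theorem \ref{th:mfl} computes its limit explicitly by asymptotic freeness, and the uniqueness statement of Theorem \ref{th:burgers} for equation (\ref{closedequ}) identifies the two limits. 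No transform calculus on the candidate is needed.

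Your plan instead rests on uniqueness for the PDE (\ref{PDEBurgers}), which Theorem \ref{th:burgers} does not provide: the PDE is derived only as a \emph{necessary} condition satisfied by the Stieltjes transform of a solution of (\ref{closedequ}), the uniqueness proof operates at the level of (\ref{closedequ}) via the moment recursion of Remark \ref{rmk:moments}, and the paper explicitly remarks that Cauchy--Kowalevski yields only local existence and uniqueness for (\ref{PDEBurgers}) and that a global statement could not be obtained. So exhibiting a candidate whose Stieltjes transform solves (\ref{PDEBurgers}) would not identify it with the limit; you would need to show the candidate satisfies (\ref{closedequ}) for all $C^2$ test functions (e.g.\ via its moments), or prove global PDE uniqueness in the relevant class. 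Compounding this, the computation you yourself flag as ``the main obstacle'' --- propagating $C_{\mu^W_t}(z)=C_{\mu^W_0}(e^{-2\gamma t}z)+\sigma_t^2 z$ through $H^{-1}$, $U$ and $M_{\mu^2}$ down to an evolution equation for $G_t$ --- is precisely the nontrivial step and is left undone; it is exactly what the paper's comparison argument is designed to bypass. The final long-time limit paragraph (continuity of $\boxplus_{\beta_2\alpha}$, $\delta_0$ neutral) is fine and matches the paper.
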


%The next proposition gives the behaviour of the bulk limit of an empirical measure of particles distributed according to (\ref{statdistrib}) for $\beta_2=1$.
%\begin{proposition}\label{prop:stat}
%Let us assume $\gamma\neq0,\beta_1>0,\beta_2=1$. Let $\alpha\in(0,1]$ and for all $n,m\in\mathbb{N}^*$ such that $n\leq m$ let $(\lambda^{1,n,W},\dots ,\lambda^{n,n,W})$ be a random vector distributed according to the distribution (\ref{statdistrib}), and let us define the empirical measure \[\nu^{n,W} =\frac{1}{n}\sum_{i=1}^n\delta_{\lambda^{i,n,W}}.\] Let us note $\sigma_\infty=\sqrt{\frac{\beta_1\kappa^2}{2\gamma}}$.
%Then, in the sense of weak convergence,
%\[\nu^{n,W}\underset{\underset{n/m\rightarrow\alpha}{n\rightarrow+\infty}}{\longrightarrow}\mu^{MP_{\alpha,\sigma_\infty}}.\]
%\end{proposition}
Theorem \ref{th:betamfW} coupled with the limit  (\ref{prop:stat}) allows to show the following result.

\begin{theorem}[Commutativity of the limits]\label{th:limcommutW}
Under the assumptions of Theorem \ref{th:betamfW}, let us moreover suppose $\gamma\neq0,\beta_2=1$ and that for all $n\in\mathbb{N}^*$: $\sum_{i=1}^n\mathbb{E}[\lambda^{i,n,W}_0]<+\infty$. Let us note $\sigma_\infty=\sqrt{\frac{\beta_1\kappa^2}{2\gamma}}$. Then we have in the sense of weak convergence 

\[\underset{\underset{n/m\rightarrow\alpha}{n\rightarrow\infty}}{\lim} \underset{t\rightarrow\infty}{\lim}\nu^{n,W}_t = \underset{t\rightarrow\infty}{\lim} \underset{\underset{n/m\rightarrow\alpha}{n\rightarrow\infty}}{\lim} \nu^{n,W}_t = \mu^{MP_{\alpha,\sigma_\infty}}.\]

\end{theorem}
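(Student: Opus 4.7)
The plan is to treat the two iterated limits separately and show that both equal $\mu^{MP_{\alpha,\sigma_\infty}}$, relying on ingredients that are already in place: Theorem \ref{th:betamfW} handles the $n\to\infty$ limit at fixed $t$, the long-time convergence of the eigenvalue process to the Gibbs distribution (\ref{statdistrib}) recalled from \cite{JK} handles the $t\to\infty$ limit at fixed $n$, and the classical fact (\ref{prop:stat}) identifies the large-$n$ limit of the $\beta_1$-Laguerre ensemble as Marchenko--Pastur.

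For the iterated limit $\lim_{t\to\infty}\lim_{n\to\infty}\nu^{n,W}_t$: with $\beta_2=1$, Theorem \ref{th:betamfW} gives that $(\mu^{n,W}_t)_n$ converges weakly in probability to $\mu^W_t=(e^{-\gamma t}\mu_0^W)\boxplus_\alpha \sqrt{\mu^{MP_{\alpha,\sigma_t}}}$. Invoking the continuity of $v\mapsto v^2$ and the equivalence between convergence of $(\nu^n)$ and $(\mu^n)$ noted in the introduction, this yields $\nu^{n,W}_t\to \nu^W_t$ weakly in probability, where $\nu^W_t$ is the unique probability measure on $\mathbb{R}_+$ whose symmetrized square root equals $\mu^W_t$. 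The long-time part of Theorem \ref{th:betamfW} then gives $\mu^W_t\to \sqrt{\mu^{MP_{\alpha,\sigma_\infty}}}$ weakly as $t\to\infty$, and squaring once more (by the same continuity argument) yields $\nu^W_t\to \mu^{MP_{\alpha,\sigma_\infty}}$, as desired.

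For the iterated limit $\lim_{n\to\infty}\lim_{t\to\infty}\nu^{n,W}_t$: for each fixed $n$, the integrability assumption $\sum_{i=1}^n \mathbb{E}[\lambda^{i,n,W}_0]<\infty$ together with the result of \cite{JK} recalled just before (\ref{statdistrib}) ensures that the law of $(\lambda^{1,n,W}_t,\ldots,\lambda^{n,n,W}_t)$ converges weakly as $t\to\infty$ to the stationary Gibbs distribution (\ref{statdistrib}) with $\beta_2=1$. Pushing this forward through the continuous empirical-measure map gives that $\nu^{n,W}_t$ converges in law, as $t\to\infty$, to the random empirical measure $\nu^{n,W}_\infty:=\frac{1}{n}\sum_{i=1}^n\delta_{\lambda^{i,n,W}_\infty}$ associated with this stationary distribution. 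The $\beta_1$-Laguerre convergence (\ref{prop:stat}) then gives $\nu^{n,W}_\infty\to \mu^{MP_{\alpha,\sigma_\infty}}$ weakly in probability as $n/m\to\alpha$, and since the target is deterministic this realises the outer limit.

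The main obstacle is essentially one of bookkeeping rather than mathematics: one must interpret the two iterated limits correctly. In the second iterated limit, the inner limit is a convergence in law of random probability measures, and one relies on the deterministic nature of the subsequent outer limit to avoid any tightness or joint-limit issue; in the first, one relies on the continuity of squaring/symmetrizing to pass between the $\mu^{n,W}$-level statements provided by Theorem \ref{th:betamfW} and the $\nu^{n,W}$-level conclusion required by the statement. Beyond this, the proof is just the chaining together of the quoted results.
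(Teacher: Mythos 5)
Your proposal is correct and follows essentially the same route as the paper: the inner $t\to\infty$ limit at fixed $n$ via the stationarity results of \cite{JK}, the identification of the large-$n$ limit of the stationary ensemble via (\ref{prop:stat}), and Theorem \ref{th:betamfW} for the other iterated limit. The only difference is that you spell out the passage between the $\mu^{n,W}$-level and $\nu^{n,W}$-level statements, which the paper leaves implicit.
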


\section{Useful tools}
\label{sect:preliminaries}
\subsection{The Cauchy-Stieltjes transform}
Let us denote \[\mathbb{C}_\pm= \{z\in\mathbb{C}, \pm\Im(z)>0\}.\]
Let $\mu$ be a probability measure on $\mathbb{R}$. Its Cauchy-Stieltjes transform is defined by
\begin{equation*}
    z\in\mathbb{C}_+\mapsto G_\mu(z)=\int_\mathbb{R}\frac{\mu(dv)}{z-v}\in\mathbb{C}_-.
\end{equation*}
The next result shows that this transformation characterizes the probability measure on $\mathbb{R}$.

\begin{theorem}[\cite{mingospeicher}]\label{thmcauchytransform}
(1)Let $\mu$ be a probability measure on $\mathbb{R}$. Then,
\begin{enumerate}[label=(\roman*)]
\item $G_\mu$ is analytic on $\mathbb{C}_+$,
\item we have \[\underset{y\rightarrow\infty}{\lim}\mathrm{i}yG_\mu(\mathrm{i}y)=1.\]
\end{enumerate}

(2) Any probability measure on $\mathbb{R}$ can be recovered from its Cauchy-Stieltjes transform $G_\mu$ via the Stieltjes inversion formula : for all $a,b\in\mathbb{R}$ with a<b,
\begin{equation*}
    -\underset{\epsilon\downarrow0}{\lim}\frac{1}{\pi}\int_a^b\Im(G_\mu(x+\mathrm{i}\epsilon))dx=\mu((a,b))+\frac{1}{2}\mu(\{a,b\}).
\end{equation*}
It follows that the Cauchy-Stieltjes transform $G_\mu$ characterises the measure $\mu$.
%In particular, for any probability measure $\nu$ on $\mathbb{R}$ with Cauchy-Stieltjes transform $G_\nu$ it follows that $\mu=\nu$ whenever $G_\mu=G_\nu$.

(3)Let $G:\mathbb{C_+\mapsto\mathbb{C}_-}$ be an analytic function which satisfies \[\underset{y\rightarrow\infty}{\limsup}y|G(\mathrm{i}y)|=1.\] Then, there exists a unique probability measure $\mu$ on $\mathbb{R}$ such that $G=G_\mu$.
\end{theorem}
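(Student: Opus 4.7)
For (i), the integrand $v\mapsto 1/(z-v)$ is bounded by $1/|\Im z|$ uniformly on a neighborhood of any fixed $z_0\in\mathbb{C}_+$, so I can differentiate under the integral sign by dominated convergence, producing $G'_\mu(z) = -\int (z-v)^{-2}\mu(dv)$. Alternatively, Morera's theorem combined with Fubini on a closed triangle in $\mathbb{C}_+$ (the integrand is entire as a function of $z$ on such a triangle) gives holomorphy. For (ii), I would write
\[
\mathrm{i}yG_\mu(\mathrm{i}y) = \int_\mathbb{R}\frac{\mathrm{i}y}{\mathrm{i}y-v}\mu(dv) = \int_\mathbb{R}\frac{1}{1-v/(\mathrm{i}y)}\mu(dv),
\]
observe $|\mathrm{i}y/(\mathrm{i}y-v)| = y/\sqrt{y^2+v^2}\le 1$ and the pointwise limit as $y\to\infty$ is $1$; dominated convergence then yields the claim.

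\textbf{Plan for part (2).} The starting identity is
\[
-\frac{1}{\pi}\Im G_\mu(x+\mathrm{i}\epsilon) = \int_\mathbb{R} P_\epsilon(x-v)\,\mu(dv), \qquad P_\epsilon(u):=\frac{1}{\pi}\frac{\epsilon}{u^2+\epsilon^2},
\]
the Poisson kernel of the upper half-plane. I apply Fubini to get
\[
-\frac{1}{\pi}\int_a^b\Im G_\mu(x+\mathrm{i}\epsilon)\,dx = \int_\mathbb{R} F_\epsilon(a,b;v)\,\mu(dv),
\]
where $F_\epsilon(a,b;v) = \frac{1}{\pi}\big[\arctan((b-v)/\epsilon)-\arctan((a-v)/\epsilon)\big]$. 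As $\epsilon\downarrow 0$, $F_\epsilon(a,b;v)\to \mathds{1}_{(a,b)}(v) + \tfrac{1}{2}\mathds{1}_{\{a,b\}}(v)$ pointwise, and $0\le F_\epsilon\le 1$, so dominated convergence delivers the Stieltjes inversion formula. Characterization follows because the values of $\mu$ on the algebra of finite unions of intervals with endpoints outside a countable set (the atoms) determine $\mu$ on the Borel $\sigma$-algebra.

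\textbf{Plan for part (3).} Given such a $G$, I define the candidate approximating measures by $d\mu_\epsilon(x) = -\tfrac{1}{\pi}\Im G(x+\mathrm{i}\epsilon)\,dx$ on $\mathbb{R}$. Because $G$ maps $\mathbb{C}_+\to\mathbb{C}_-$, these are nonnegative measures. I first check $\mu_\epsilon(\mathbb{R})\le 1+o(1)$ by the contour/growth argument: using the growth condition $\limsup_{y\to\infty}y|G(\mathrm{i}y)|=1$ together with the fact that $-y\Im G(x+\mathrm{i}y)$ is the Poisson integral of $\mu_\epsilon$ against a bounded harmonic function, one obtains $\mu_\epsilon(\mathbb{R})\le 1$ (and tightness, because otherwise $y|G(\mathrm{i}y)|$ would be bounded below by a larger constant for large $y$, via $\Im G(\mathrm{i}y)= -y\int (y^2+v^2)^{-1}\mu_\epsilon(dv)$ applied to a truncation). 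By Prokhorov's theorem I extract a subsequence $\mu_{\epsilon_k}$ converging weakly to a subprobability $\mu$. The Poisson representation for harmonic functions on the half-plane gives $G(z) = \int (z-v)^{-1}\mu(dv) + $ an affine boundary term, which the hypothesis $\limsup y|G(\mathrm{i}y)|=1$ forces to vanish while simultaneously forcing $\mu(\mathbb{R})=1$. Uniqueness of $\mu$ is immediate from part (2).

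\textbf{Main obstacle.} Parts (1) and (2) are routine dominated-convergence and Poisson-kernel arguments. The real content is in part (3), where one must establish the Nevanlinna/Herglotz representation: that an analytic function from $\mathbb{C}_+$ to $\mathbb{C}_-$ with the prescribed normalization at $\mathrm{i}\infty$ arises from a probability measure on $\mathbb{R}$ via its Cauchy transform. The subtlety is threefold: proving that the approximating densities $-\tfrac{1}{\pi}\Im G(\cdot+\mathrm{i}\epsilon)$ form a tight family of subprobability measures, identifying the limit's Cauchy transform with $G$ up to an affine correction using Poisson representation of nonnegative harmonic functions on $\mathbb{C}_+$, and finally ruling out this affine part via the $\limsup$ hypothesis. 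I would rely on the standard Nevanlinna--Herglotz machinery rather than redo this, and cite \cite{mingospeicher} for the complete argument.
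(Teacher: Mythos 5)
This statement is quoted in the paper from \cite{mingospeicher} and is given no proof there, so there is nothing to diverge from: your outline is the standard argument, with parts (1) and (2) carried out correctly by dominated convergence and the Poisson-kernel/Fubini computation, and part (3) correctly reduced to the Nevanlinna--Herglotz representation (tightness of the measures $-\tfrac{1}{\pi}\Im G(\cdot+\mathrm{i}\epsilon)\,dx$, identification of the limit, elimination of the affine term by the growth condition), which you, like the paper, delegate to the cited reference. That is consistent with how the paper treats the result, so the proposal is fine as it stands.
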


\subsection{Free convolution and rectangular free convolution from a random matrix point of view}
\label{subsect:freeconvpreli}
Let $M$ be a $n\times n$ real symmetric matrix. We will denote its eigenvalues by \[\lambda^1(M)\leq\dots\leq\lambda^n(M),\] and its empirical (spectral) measure by
\[\mu^M=\frac{1}{n}\sum_{i=1}^n\delta_{\lambda^i(M)}.\]
Let $N$ be a $n\times m$ real valued matrix. Its empirical singular measure is the empirical measure of the positive semi-definite symmetric matrix $NN^*$.

Free convolutions are operations on probability measures on the real line which allow to compute the empirical spectral  or singular measures of large random matrices (i.e. matrices whose size goes to infinity) which are expressed as sums or products of independent large random matrices whose spectral measures are already known. The free additive convolution is denoted by $\boxplus$. 
\begin{theorem}[Additive Free Convolution for random matrices \cite{voiculescu1992free}]
For all $n\in\mathbb{N}^*$ let us define $M_n$ and $N_n$ two independent $n\times n$ random symmetric matrices, such that 
\begin{itemize}
    \item the distribution of $M_n$ is invariant under the action of the unitary group by conjugation, 
    \item the empirical measures sequences $(\mu^{M_n})_n$ and $(\mu^{N_n})_n$ converge weakly almost surely when $n$ goes to infinity to non-random probability measures, respectively $\mu^M_\infty$ and $\mu^N_\infty$.
\end{itemize}
Then, in the sense of weakly almost sure convergence, \[\mu^{M_n+N_n}\underset{n\rightarrow\infty}{\longrightarrow}\mu^M_\infty\boxplus\mu^N_\infty.\]
\end{theorem}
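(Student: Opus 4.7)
The plan is to deduce the convergence of $\mu^{M_n+N_n}$ from Voiculescu's asymptotic freeness theorem: once one shows that the pair $(M_n,N_n)$ is asymptotically free with respect to the normalized trace $\tau_n=\frac{1}{n}\mathrm{Tr}$, the empirical measure of $M_n+N_n$ automatically converges to $\mu^M_\infty\boxplus\mu^N_\infty$ by the very definition of the free additive convolution via moments. The main reduction is therefore: \emph{show that $(M_n,N_n)$ is asymptotically free}, and invoke the moment characterization of $\boxplus$.

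To establish asymptotic freeness, I would exploit the unitary (here: orthogonal, since we are in the real symmetric setting, or unitary depending on the chosen ensemble) invariance of $M_n$. Writing $M_n=U_n D_n U_n^*$ with $D_n=\mathrm{diag}(\lambda^1(M_n),\dots,\lambda^n(M_n))$ and $U_n$ Haar-distributed and independent of $N_n$, the problem reduces to controlling mixed traces of the form
\[
\tau_n\bigl(U_n P_1(D_n)U_n^* Q_1(N_n)\cdots U_n P_k(D_n)U_n^*Q_k(N_n)\bigr)
\]
for polynomials $P_i,Q_i$ with the centering $\tau_n(P_i(D_n))=\tau_n(Q_i(N_n))=0$. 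The key step is a Weingarten/moment computation showing that such expectations vanish in the large-$n$ limit. The inputs are the classical estimates on Weingarten functions (dominant contributions come from non-crossing pair partitions indexing the genus-zero terms) together with the uniform moment control provided by weak convergence of $\mu^{M_n}$ and $\mu^{N_n}$. This is exactly the content of Voiculescu's original asymptotic freeness theorem and its later refinements; I would cite it rather than redo the combinatorics.

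Given asymptotic freeness, the conclusion is immediate: the moments $\tau_n((M_n+N_n)^k)$ expand into sums of alternating mixed moments of centered powers of $M_n$ and $N_n$; each of these converges to the corresponding mixed free moment by asymptotic freeness, so
\[
\lim_{n\to\infty}\int x^k\,\mathrm{d}\mu^{M_n+N_n}(x)=\int x^k\,\mathrm{d}(\mu^M_\infty\boxplus\mu^N_\infty)(x)
\]
for every $k\in\mathbb{N}$. Combined with a standard tightness argument (compactly supported limits, or uniform moment bounds) this upgrades moment convergence to weak convergence. To obtain almost sure, rather than in-probability, convergence one appeals to concentration of the empirical spectral distribution for unitarily invariant ensembles (a Borel--Cantelli argument based on Lipschitz concentration of $\tau_n$ as a function of $U_n$ on the unitary group, which gives exponential bounds on the fluctuations of each fixed moment).

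The principal obstacle is the Weingarten estimate underlying asymptotic freeness: one must verify that the leading contributions to mixed Haar moments are exactly indexed by non-crossing pairings and that subleading terms are $O(n^{-2})$. This is classical but technical; in a self-contained presentation it would occupy the bulk of the argument, while everything downstream (moment convergence, concentration, tightness) is routine.
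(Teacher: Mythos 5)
This statement is quoted in the paper as a known result of Voiculescu (with a citation) and is not proved there, so there is no in-paper argument to compare against; your proposal is essentially the standard proof from the literature: reduce to asymptotic freeness of $(M_n,N_n)$ via the Haar-unitary conjugation $M_n=U_nD_nU_n^*$ and Weingarten calculus, then use the moment characterization of $\boxplus$ and concentration to get almost sure weak convergence. That outline is sound. One genuine gap to flag: the theorem as stated only assumes \emph{weak} almost sure convergence of $\mu^{M_n}$ and $\mu^{N_n}$ to (possibly non-compactly supported) probability measures, and weak convergence does not give convergence of moments, nor does it give the uniform moment control your Weingarten estimates require. Voiculescu's original asymptotic freeness theorem assumes uniformly bounded operator norms (or convergence in moments); extending to general limit laws as stated requires a truncation/cut-off argument (replace $M_n,N_n$ by their spectral truncations at level $R$, use continuity of $\boxplus$ with respect to weak convergence, and let $R\to\infty$), which your sketch does not supply. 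You gesture at ``compactly supported limits, or uniform moment bounds,'' but those are additional hypotheses, not consequences of the stated assumptions, so the moment-method route as written does not close under the theorem's actual hypotheses without that extra truncation step. A second, minor point: for real symmetric matrices the natural invariance is under the orthogonal group, and the Weingarten calculus for $O(n)$ differs in its subleading structure from the unitary case; you acknowledge this but the statement's mix of ``real symmetric'' with ``unitary invariance'' should be resolved one way or the other before the freeness argument is run.
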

This operation can be equivalently defined in reference to free elements of a non commutative probability space.
Let $\mu$ be a probability measure on $\mathbb{R}$. Its R-transform is defined on a neighbourhood of zero by
\[R_\mu(z) = G_\mu^{-1}(z)-\frac{1}{z}.\]

\begin{theorem}
The R-tranform linearizes the free convolution : for $\mu$ and $\nu$ probability measures on the real line, and for $z$ in a neighbourhood of zero,
\[R_{\mu\boxplus\nu}(z)=R_{\mu}(z)+R_{\nu}(z),\] and $\mu\boxplus\nu$ is the unique probability measure verifying this relation.
\end{theorem}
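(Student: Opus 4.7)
The plan is to establish the additivity first and then deduce the uniqueness. For additivity, the cleanest route is the combinatorial theory of free cumulants. I would define the sequence of free cumulants $(\kappa_n(\mu))_{n \geq 1}$ of a compactly supported probability measure implicitly through the moment-cumulant formula
\[
\int x^n\,\mu(dx) = \sum_{\pi} \prod_{V \in \pi} \kappa_{|V|}(\mu),
\]
where the sum runs over all non-crossing partitions $\pi$ of $\{1,\dots,n\}$. A Möbius inversion on the non-crossing partition lattice, combined with Lagrange inversion applied to the expansion of $G_\mu$ at infinity, shows that the generating series $\sum_{n\geq 1}\kappa_n(\mu)z^{n-1}$ coincides, as an analytic function on a neighbourhood of $0$, with the R-transform $R_\mu(z) = G_\mu^{-1}(z) - 1/z$.

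The heart of the proof is then Speicher's characterisation of freeness: two self-adjoint elements $a, b$ in a non-commutative probability space are free if and only if all mixed free cumulants in $a$ and $b$ vanish. Realising $\mu$ and $\nu$ as the distributions of free self-adjoint elements $a,b$ (for instance inside the free product of the $C^*$-algebras generated by each), multilinearity of $\kappa_n$ combined with the vanishing of mixed cumulants yields $\kappa_n(a+b) = \kappa_n(a) + \kappa_n(b)$ for every $n \geq 1$, which is exactly $R_{\mu\boxplus\nu}(z) = R_\mu(z) + R_\nu(z)$ on a common neighbourhood of $0$. A standard truncation argument extends the identity from compactly supported measures to arbitrary probability measures on $\mathbb{R}$.

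For the uniqueness assertion, I would suppose that $\rho$ is a probability measure on $\mathbb{R}$ satisfying $R_\rho(z) = R_\mu(z) + R_\nu(z)$ on a neighbourhood $U$ of $0$. Then $G_\rho^{-1} = G_{\mu\boxplus\nu}^{-1}$ on $U$, so by inversion $G_\rho$ and $G_{\mu\boxplus\nu}$ coincide on a neighbourhood of $+\mathrm{i}\infty$ inside $\mathbb{C}_+$. Both functions are analytic on $\mathbb{C}_+$ by Theorem \ref{thmcauchytransform}(1), so they agree on all of $\mathbb{C}_+$ by the identity principle, and the Stieltjes inversion formula of Theorem \ref{thmcauchytransform}(2) forces $\rho = \mu \boxplus \nu$.

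The hard part is the first step: justifying rigorously that the coefficients $\kappa_n(\mu)$ obtained from the moment-cumulant inversion really do coincide with the Taylor coefficients of the analytic R-transform, together with Speicher's combinatorial characterisation of freeness. Both arguments live on the lattice of non-crossing partitions and involve delicate bookkeeping. Since this result is one of the founding theorems of free probability, in practice the cleanest course is to refer the reader to the complete treatment in \cite{mingospeicher} rather than to reproduce the full combinatorial development here.
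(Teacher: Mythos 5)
The paper does not prove this statement at all: it is quoted in Section~\ref{sect:preliminaries} as a classical background result of free probability (alongside the asymptotic-freeness theorem cited from \cite{voiculescu1992free}), so there is no in-paper proof to compare against. Your outline is the standard modern argument --- free cumulants via M\"obius inversion on non-crossing partitions, Speicher's vanishing of mixed cumulants for free elements, and uniqueness via invertibility of the Cauchy transform plus Stieltjes inversion --- and the structure is sound. Your deferral to \cite{mingospeicher} for the combinatorial details is exactly what the paper itself implicitly does.

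One substantive caveat: the statement is for arbitrary probability measures on $\mathbb{R}$, and your passage from the compactly supported case to the general case via ``a standard truncation argument'' understates the difficulty. For measures with unbounded support the moments need not exist, the R-transform is only defined on a Stolz-angle domain at $0$ rather than a full neighbourhood, and the extension of the linearization property is the content of Bercovici--Voiculescu's work on free convolution of measures with unbounded support, which proceeds by complex-analytic (subordination/Nevanlinna) methods rather than truncation of cumulants. If you intend the proof to cover the general statement, that step needs either the correct citation or a genuinely different argument; as written it is the one real gap in an otherwise correct sketch.
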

Let us denote by $\mathcal{B}(\mathbb{R})$ the Borel sets of $\mathbb{R}$.
Let us define the application from $\mathcal{M}_1(\mathbb{R}_+)$, the set of probability measures on $\mathbb{R}_+$, to $\mathcal{M}^S_1(\mathbb{R}) = \{\mu\in\mathcal{M}_1(\mathbb{R}), \forall A\in\mathcal{B}(\mathbb{R}), \mu(A)=\mu(-A)\}$, the set of symmetric probability measures on $\mathbb{R}$: \[\mathrm{sym}:\mu\in\mathcal{M}_1(\mathbb{R}_+)\mapsto\left(\mathrm{sym}(\mu):A\in\mathcal{B}(\mathbb{R})\mapsto\frac{\mu(A\cap\mathbb{R}_+)+\mu(-A\cap\mathbb{R}_+)}{2}\right)\in\mathcal{M}
^S_1(\mathbb{R}).\] This application is bijective from $\mathcal{M}_1(\mathbb{R}_+)$ to $\mathcal{M}^S_1(\mathbb{R})$ and admits the inverse: 
\begin{equation}
\label{eq:syminv}
    \mathrm{sym}^{-1}(\nu): A\in\mathcal{B}(\mathbb{R}_+)\mapsto \mathds{1}_A(0)\nu(\{0\})+2\nu(A\backslash\{0\} ).
\end{equation}

For any $\alpha\in[0,1]$, the rectangular free convolution denoted $\boxplus_\alpha$ can be defined the following way.

\begin{theorem}[Additive free rectangular convolution of ratio $\alpha$ for random matrices \cite{rect}]
\label{th:rectfreeconv}
For all $n,m\in\mathbb{N}^*$ let us define $M_{n,m}$ and $N_{n,m}$ two independent $n\times m$ random  matrices, such that 
\begin{itemize}
    \item the distribution of $M_{n,m}$ is invariant under the action of the unitary group by conjugation on any side, 
    \item the empirical measures sequences $(\mu^{\sqrt{M_{n,m}M_{n,m}^*}})_{n,m}$ and $(\mu^{\sqrt{N_{n,m}N_{n,m}^*}})_{n,m}$ respectively converge in probability, when $n$ and $m$ go to infinity with $n/m$ tending to $\alpha\in(0,1]$, to non-random probability measures  $\mu^M_\infty$ and $\mu^N_\infty$.
\end{itemize}
Then in the sense of  weak convergence in probability, \[\mathrm{sym}\left(\mu^{\sqrt{(M_{n,m}+N_{n,m})(M_{n,m}+N_{n,m})^*}}\right)\underset{\underset{n/m\rightarrow\alpha}{n,m\rightarrow\infty}}{\longrightarrow}\mathrm{sym}(\mu^M_\infty)\boxplus_\alpha \mathrm{sym}(\mu^N_\infty).\]
\end{theorem}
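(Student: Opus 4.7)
The plan is to reduce the rectangular singular value statement to a square Hermitian spectral statement via the standard Hermitization (dilation) trick, then invoke the preceding asymptotic freeness theorem for square Hermitian random matrices, and finally translate the resulting classical free additive convolution back to the rectangular setting; this last translation is what \emph{defines} $\boxplus_\alpha$.

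First, to any $n\times m$ matrix $A$ with $n\le m$ I would associate the Hermitian block matrix
\[
\tilde A = \begin{pmatrix} 0 & A \\ A^* & 0 \end{pmatrix} \in \mathcal{M}_{n+m}(\mathbb{C}).
\]
A singular value decomposition of $A$ shows that the spectrum of $\tilde A$ consists of the $n$ singular values of $A$, their opposites, and $m-n$ additional zero eigenvalues. Consequently the empirical spectral measure of $\tilde A$ decomposes as
\[
\mu^{\tilde A} = \frac{2n}{n+m}\,\mathrm{sym}\!\left(\mu^{\sqrt{AA^*}}\right) + \frac{m-n}{n+m}\,\delta_0.
\]
In the regime $n/m\to\alpha$ the weights converge to $\tfrac{2\alpha}{1+\alpha}$ and $\tfrac{1-\alpha}{1+\alpha}$, so weak convergence of $\mu^{\sqrt{A_{n,m}A_{n,m}^*}}$ to $\mu^A_\infty$ is equivalent to weak convergence of $\mu^{\tilde A_{n,m}}$ to an explicit symmetric probability measure $\tilde\mu^A_\infty$ depending only on $\mathrm{sym}(\mu^A_\infty)$ and $\alpha$.

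Second, I would establish asymptotic freeness of $\tilde M_{n,m}$ and $\tilde N_{n,m}$ in the tracial non-commutative probability space $(\mathcal{M}_{n+m}(\mathbb{C}),\tfrac{1}{n+m}\mathrm{Tr})$. The invariance of the law of $M_{n,m}$ under left and right unitary multiplication translates into invariance of the law of $\tilde M_{n,m}$ under conjugation by block-diagonal unitaries $\mathrm{diag}(U,V)$ with $U\in U(n)$, $V\in U(m)$. Inserting an independent Haar-distributed block-unitary of this form between $\tilde M_{n,m}$ and $\tilde N_{n,m}$ does not alter their joint law, and a Weingarten calculus computation shows that mixed traces of monomials in $\tilde M_{n,m}$ and $\tilde N_{n,m}$ factorize in the limit according to the free-independence recipe. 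The asymptotic freeness theorem stated just before the statement, in its variant for this partial unitary invariance, then yields
\[
\mu^{\tilde M_{n,m}+\tilde N_{n,m}} \underset{\underset{n/m\to\alpha}{n,m\to\infty}}{\longrightarrow} \tilde\mu^M_\infty \boxplus \tilde\mu^N_\infty
\]
weakly in probability.

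Finally, Hermitization is additive, $\tilde M+\tilde N = \widetilde{M+N}$, so the previous display gives weak convergence of $\mu^{\sqrt{(M_{n,m}+N_{n,m})(M_{n,m}+N_{n,m})^*}}$ after inverting the affine correspondence of the first step. The resulting symmetric limit depends only on $\mathrm{sym}(\mu^M_\infty)$, $\mathrm{sym}(\mu^N_\infty)$ and $\alpha$, and is by definition $\mathrm{sym}(\mu^M_\infty)\boxplus_\alpha\mathrm{sym}(\mu^N_\infty)$. The main obstacle, which I would spend the bulk of the work on, is the careful bookkeeping of the atom at $0$: when $\alpha<1$ the $(m-n)$ spurious zero eigenvalues carry positive mass in the limit, and they must be subtracted consistently on both sides of the $\boxplus$ identity. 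This is exactly what makes $\boxplus_\alpha$ genuinely different from the classical $\boxplus$ and is the reason why it is most cleanly linearized by Benaych-Georges' rectangular $R$-transform rather than by the usual $R$-transform of the symmetrized measures.
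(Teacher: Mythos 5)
The paper does not actually prove this statement: it is imported wholesale from Benaych-Georges (\cite[Theorem~3.13]{rect}), so your proposal has to stand on its own. It does not, because its central step --- asymptotic \emph{scalar} freeness of the Hermitian dilations $\tilde M_{n,m}$ and $\tilde N_{n,m}$ in $\bigl(\mathcal{M}_{n+m}(\mathbb{C}),\tau\bigr)$ with $\tau=\frac{1}{n+m}\mathrm{Tr}$ --- is false whenever $\alpha<1$. Invariance under conjugation by block-diagonal unitaries $\mathrm{diag}(U,V)$ is strictly weaker than invariance under the full group $U(n+m)$, and the asymptotic freeness theorem quoted before the statement has no ``variant for partial unitary invariance'' that still delivers scalar freeness. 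A short moment computation detects the failure: writing $M=UDV^*$ and averaging over the Haar factor $U$, independence of $M$ and $N$ gives
\[
\mathbb{E}\bigl[\tau(\tilde M^2\tilde N^2)\bigr]
=\frac{1}{n+m}\Bigl(\frac{1}{n}+\frac{1}{m}\Bigr)\,
\mathbb{E}\bigl[\mathrm{Tr}(MM^*)\bigr]\,\mathbb{E}\bigl[\mathrm{Tr}(NN^*)\bigr]
=\frac{1}{nm}\,\mathbb{E}\bigl[\mathrm{Tr}(MM^*)\bigr]\,\mathbb{E}\bigl[\mathrm{Tr}(NN^*)\bigr],
\]
whereas scalar freeness of the centered elements $\tilde M,\tilde N$ would force $\tau(\tilde M^2\tilde N^2)\approx\tau(\tilde M^2)\,\tau(\tilde N^2)=\frac{4}{(n+m)^2}\mathrm{Tr}(MM^*)\,\mathrm{Tr}(NN^*)$. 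The two agree only when $(n+m)^2=4nm$, i.e.\ $n=m$; for $n/m\to\alpha<1$ the mixed moments of $(\tilde M+\tilde N)$ therefore do \emph{not} follow the free-independence recipe, and $\mu^{\tilde M+\tilde N}$ does not converge to $\tilde\mu^M_\infty\boxplus\tilde\mu^N_\infty$.

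Consequently your final identification is also off target: $\boxplus_\alpha$ is \emph{not} the operation induced by scalar $\boxplus$ on the dilated measures $\tilde\mu=\frac{2\alpha}{1+\alpha}\mathrm{sym}(\mu)+\frac{1-\alpha}{1+\alpha}\delta_0$, and in general $\widetilde{\mu\boxplus_\alpha\nu}\neq\tilde\mu\boxplus\tilde\nu$. The obstruction is not the bookkeeping of the atom at $0$ that you flag at the end, but the loss of scalar freeness itself. The correct structure behind the Hermitization is freeness \emph{with amalgamation} over the two-dimensional algebra $D=\mathbb{C}p\oplus\mathbb{C}q$ spanned by the corner projections $p=\mathrm{diag}(I_n,0)$ and $q=\mathrm{diag}(0,I_m)$, with conditional expectation $E_D(X)=\frac{1}{n}\mathrm{Tr}(pXp)\,p+\frac{1}{m}\mathrm{Tr}(qXq)\,q$; it is exactly this operator-valued freeness that the rectangular $R$-transform $C_\mu$ of Section~\ref{sect:preliminaries} linearizes, which is why it differs from the $R$-transform of the symmetrized measures. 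Your first step (the spectral decomposition of $\tilde A$ and the weights $\frac{2\alpha}{1+\alpha}$, $\frac{1-\alpha}{1+\alpha}$) is correct, and the overall strategy is indeed the one Benaych-Georges follows, but the freeness claim must be upgraded to the amalgamated version and the passage back to a binary operation on symmetric measures must go through the $D$-valued (or rectangular) cumulants rather than scalar free cumulants.
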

This operation can also be equivalently defined in reference to free elements of a  rectangular non commutative probability space.

Let $\mu$ be a symmetric probability measure on $\mathbb{R}$. Its rectangular Cauchy transform with ratio $\alpha$ is defined by
\[H_\mu:z\in\mathbb{C}\backslash[0,+\infty)\mapsto  z(\alpha M_{\mu^2}(z)+1)( M_{\mu^2}(z)+1),\]
where \[M_{\mu^2}:z\in\mathbb{C}\backslash[0,+\infty)\mapsto \int_\mathbb{R}\frac{zv^2}{1-zv^2}d\mu(v)=\frac{1}{\sqrt{z}}G_\mu\left(\frac{1}{\sqrt{z}}\right)-1.\]
This equality can be derived  the following way in a neighbourhood of zero :
\begin{align*}
    G_{\mu}\left(\frac{1}{\sqrt{z}}\right) &= z^{\frac{1}{2}}\int_\mathbb{R}\frac{1}{1-z^{\frac{1}{2}}v}d\mu(v)\\
    &= z^{\frac{1}{2}}\sum_{k\geq0}\int_\mathbb{R}z^{\frac{k}{2}}v^kd\mu(v)\\
    &= z^{\frac{1}{2}}\left(1+zv^2\sum_{k\geq0}\int_\mathbb{R}z^{k}v^{2k}d\mu(v)\right),
\end{align*}
where we use in the last equality the fact that $\mu$ is symmetric.

The rectangular R-transform with ratio $\alpha$ of $\mu$ is defined on a neighbourhood of zero by
\[C_\mu(z)=U\left(\frac{z}{H^{-1}_\mu(z)}-1\right),\] where on a neighbourhood of zero
\begin{equation*}
U(z)=
    \left\{
\begin{array}{c }\displaystyle
    \frac{-\alpha-1+[(\alpha+1)^2+4\alpha z]^{1/2}}{2\alpha} \text{ if }\alpha\neq0 \\
    z \text{ if } \alpha=0. 
\end{array}
\right.
\end{equation*}

\begin{theorem}[\cite{rect}]
\label{th:rectRtransform}
The rectangular R-transform with ratio $\alpha$ linearizes the rectangular free convolution with ratio $\alpha$ : for $\mu$ and $\nu$ symmetric probability measures on the real line, and for $z$ in a neighbourhood of zero,
\[C_{\mu\boxplus_\alpha\nu}(z)=C_{\mu}(z)+C_{\nu}(z),\] and $\mu\boxplus_\alpha\nu$ is the unique symmetric probability measure verifying this relation.
\end{theorem}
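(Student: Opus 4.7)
The plan is to establish additivity of $C_\mu$ through a combinatorial cumulant approach analogous to Speicher's proof for the classical free convolution, adapted to the rectangular setting.

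First I would set up the appropriate algebraic framework: a rectangular non-commutative probability space equipped with two faithful tracial states $\varphi_n,\varphi_m$ corresponding to the ``square'' $n\times n$ and $m\times m$ corners, with the ratio $\alpha$ encoding the asymptotic proportion $n/m\to\alpha$. In this framework, rectangular elements $a$ of shape $n\times m$ sit between the two corners, and the symmetrized singular distribution of $a$ is determined by the sequence of alternated moments $\varphi_n((aa^*)^k)$ and $\varphi_m((a^*a)^k)$.

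Next I would introduce the combinatorial object central to Benaych-Georges' theory: non-crossing partitions of alternating sequences carrying a coloring that tracks when one is in the $n\times n$ versus the $m\times m$ corner. These induce rectangular free cumulants $\kappa^{(\alpha)}_{2k}(a)$ through a moment-cumulant inversion formula. The key combinatorial fact is that rectangular freeness of $a$ and $b$ is equivalent to the vanishing of all mixed cumulants, which directly implies additivity: $\kappa^{(\alpha)}_{2k}(a+b)=\kappa^{(\alpha)}_{2k}(a)+\kappa^{(\alpha)}_{2k}(b)$.

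The main obstacle is then to show that the analytic object $C_\mu(z)=U\!\left(\frac{z}{H_\mu^{-1}(z)}-1\right)$ is essentially the generating function of these cumulants, namely $C_\mu(z)=\sum_{k\ge 1}\kappa^{(\alpha)}_{2k}(\mu)\,z^k$. This requires unraveling the definition of $H_\mu$ via $M_{\mu^2}$ and inverting the moment-cumulant relations; the auxiliary map $U$ is engineered precisely to absorb the asymmetry between the two corner traces, its specific quadratic form reflecting that a rectangular block contributes jointly to $\varphi_n$ and $\varphi_m$ with weights $1$ and $\alpha$. Once this identification is performed, additivity of cumulants translates immediately into $C_{\mu\boxplus_\alpha\nu}=C_\mu+C_\nu$.

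For uniqueness, I would observe that a symmetric probability measure on $\mathbb{R}$ is characterized by its even moments. Since $C_\mu$ determines, via the algebraic relations above, the full moment sequence of $\mu^2$, it determines $\mu^2$ by Theorem \ref{thmcauchytransform}, hence $\mu$ itself through the inverse of the map $\mathrm{sym}\circ\sqrt{\cdot}$ given in \eqref{eq:syminv}. Existence of a symmetric probability measure realizing $C_\mu+C_\nu$ is in turn guaranteed by the random matrix model of Theorem \ref{th:rectfreeconv}, applied to two independent bi-unitarily invariant rectangular matrices whose singular spectra converge to $\mu$ and $\nu$.
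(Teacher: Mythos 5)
The paper does not prove this statement: it is imported verbatim from \cite{rect}, so there is no in-paper argument to compare yours against. Judged on its own terms, your outline correctly identifies the combinatorial strategy of Benaych-Georges (rectangular probability space with two corner traces, colored non-crossing partitions, vanishing of mixed cumulants, identification of $C_\mu$ with the cumulant generating series), but it has two genuine gaps. First, the step you yourself call ``the main obstacle'' --- proving that $C_\mu(z)=U\bigl(\tfrac{z}{H_\mu^{-1}(z)}-1\bigr)$ equals $\sum_{k\ge1}\kappa^{(\alpha)}_{2k}(\mu)z^k$ --- is the entire technical content of the theorem, and you assert it rather than prove it. The specific quadratic form of $U$ is not ``motivated by the two weights $1$ and $\alpha$''; it has to be extracted from the moment--cumulant inversion by solving the functional equation relating $H_\mu$ to the cumulant series, and without that computation the additivity claim does not follow from vanishing of mixed cumulants.

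Second, your uniqueness argument is wrong as stated: a symmetric probability measure on $\mathbb{R}$ is \emph{not} in general characterized by its even moments (the moment problem can be indeterminate, and for a general symmetric probability measure the moments need not even exist, in which case $M_{\mu^2}$ admits no convergent power-series expansion and the whole cumulant apparatus is unavailable). Theorem \ref{thmcauchytransform} characterizes a measure by its Cauchy-Stieltjes transform, not by its moment sequence, so invoking it does not repair this. The theorem is stated for arbitrary symmetric probability measures; the combinatorial proof only covers the compactly supported (or at least moment-determined) case, and the general case requires either an extension by continuity and tightness of $\boxplus_\alpha$ or the analytic injectivity of the rectangular R-transform on a real neighbourhood of $0$ --- which is exactly the separate statement Theorem \ref{th:injectrectRtransform}, itself taken from \cite{rect}. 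Your appeal to the random matrix model of Theorem \ref{th:rectfreeconv} for existence has the same restriction: one must first construct bi-unitarily invariant models with prescribed limiting singular law for an arbitrary symmetric $\mu$, which again is routine only in the compactly supported case.
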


\begin{theorem}[Injectivity of the rectangular R-transform, \cite{rect}]
\label{th:injectrectRtransform}
If the rectangular R-transforms with ratio $\alpha$ of two symmetric probability measures coincide on a neighbourhood of $0$ in $(-\infty,0]$, then the measures are equal.
\end{theorem}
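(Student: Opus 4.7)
The strategy is to unwind the chain of analytic maps
\[ \mu \;\longrightarrow\; G_\mu \;\longrightarrow\; M_{\mu^2} \;\longrightarrow\; H_\mu \;\longrightarrow\; H_\mu^{-1} \;\longrightarrow\; C_\mu \]
step by step. Although the hypothesis only provides equality of $C_\mu$ and $C_\nu$ on a real interval $(-\epsilon,0]$, both functions are analytic on a common open set of $\mathbb{C}$ accumulating at $0$ (a suitable sector containing $(-\epsilon,0]$), so the identity principle immediately upgrades the equality to that open set. It then suffices to verify that each arrow in the chain is a local biholomorphism at the relevant point, and to conclude with the injectivity of the Cauchy--Stieltjes transform.

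For the outermost arrow, the function $U$ in the excerpt satisfies $U(0)=0$ and $U'(0)=1/(1+\alpha)\neq0$ (including the degenerate case $\alpha=0$, where $U(z)=z$), so $U$ is locally invertible at $0$ and $C_\mu=C_\nu$ yields $H_\mu^{-1}=H_\nu^{-1}$. Since $M_{\mu^2}(0)=0$, one reads $H_\mu(0)=0$ and $H_\mu'(0)=1$ directly from the product formula for $H_\mu$, so $H_\mu$ is itself a local biholomorphism at $0$ and $H_\mu=H_\nu$ follows by composition. To recover $M_{\mu^2}$ from $H_\mu$, rewrite the defining identity as the quadratic in $M_{\mu^2}(z)$
\[\alpha z\,M_{\mu^2}(z)^2+(1+\alpha)\,z\,M_{\mu^2}(z)+\bigl(z-H_\mu(z)\bigr)=0,\]
whose two roots are meromorphic functions of $H_\mu$; exactly one of them vanishes at $0$, and since $M_{\mu^2}(0)=0$ it must be $M_{\mu^2}$ itself. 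Hence $H_\mu=H_\nu$ forces $M_{\mu^2}=M_{\nu^2}$. The relation $M_{\mu^2}(z)=\tfrac{1}{\sqrt z}\,G_\mu(1/\sqrt z)-1$, with a fixed branch of $\sqrt{\cdot}$, then yields $G_\mu=G_\nu$ on an open subset of $\mathbb{C}_+$, hence on all of $\mathbb{C}_+$ by the identity principle, and finally $\mu=\nu$ by the Stieltjes inversion formula recalled in Theorem \ref{thmcauchytransform}.

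The main technical hurdle is the analytic bookkeeping near $z=0$: strictly, $M_{\mu^2}$ is only analytic on $\mathbb{C}\setminus[0,\infty)$, so $0$ sits on the boundary of its natural domain and each identity-principle argument must be carried out on an open connected set (for instance a sector) that accumulates at $0$ rather than on a full complex disk. When $\mu$ has unbounded support one must additionally control the integrand defining $M_{\mu^2}$ and commit to a consistent branch of $\sqrt{z}$ so that $1/\sqrt{z}$ lands in $\mathbb{C}_+$. Once these domain and branch choices are fixed, each of the inversions above reduces to a routine application of the analytic inverse function theorem or the identity theorem, and the chain collapses to give $\mu=\nu$.
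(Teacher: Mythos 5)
The paper does not actually prove this statement: it is imported verbatim from \cite{rect}, so there is no in-text argument to compare against. Your reconstruction --- inverting the chain $C_\mu \to H_\mu^{-1} \to H_\mu \to M_{\mu^2} \to G_\mu \to \mu$ --- is precisely the mechanism by which Benaych-Georges obtains injectivity, and the individual steps check out: $U(0)=0$ and $U'(0)=1/(1+\alpha)\neq 0$; the quadratic $\alpha z\,M^2+(1+\alpha)z\,M+(z-H_\mu(z))=0$ has roots summing to $-(1+\alpha)/\alpha\le -2$, so exactly one of them tends to $0$ as $z\to 0^-$ and it must coincide with $M_{\mu^2}$ there; and Stieltjes inversion finishes once $G_\mu=G_\nu$ is propagated by the identity theorem.

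The one point you flag but do not resolve is the only genuinely delicate one. For $\mu$ with unbounded support, $H_\mu$ is analytic only on $\mathbb{C}\setminus[0,+\infty)$, so $0$ is a boundary point of its domain and ``local biholomorphism at $0$'' is not available from the inverse function theorem; the existence and injectivity of $H_\mu^{-1}$ on sector-shaped domains touching $0$ is a separate lemma in \cite{rect} (a Julia--Carath\'eodory-type argument using $H_\mu(z)/z\to 1$ non-tangentially), not a routine step. However, since the very statement of the theorem presupposes that $C_\mu$, hence $H_\mu^{-1}$, is already defined on a neighbourhood of $0$ in $(-\infty,0]$, you may legitimately take that as given; equality of the inverses on a real interval then forces $H_\mu=H_\nu$ on an interval of $(-\infty,0)$, the identity principle on the connected open set $\mathbb{C}\setminus[0,+\infty)$ propagates it, and the rest of your argument goes through. (A small simplification: rather than passing through $G_\mu$ with a branch of $\sqrt{z}$, note that $M_{\mu^2}$ is, up to the change of variable $z\mapsto 1/z$, the Cauchy--Stieltjes transform of the push-forward of $\mu$ by $v\mapsto v^2$, which determines that push-forward and hence the symmetric measure $\mu$ directly.)
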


The convergence results with free convolution and rectangular free convolution overlap in the case of $M_n,N_n$ $n\times n$ square symmetric semi-definite positive codiagonalizable matrices. Then we have

\[
		\begin{matrix}
			\mathrm{sym}\left(\mu^{\sqrt{(M_{n}+N_{n})(M_{n}+N_{n})^*}}\right) &= &\mathrm{sym}(\mu^{M_n+N_n}) \\
			~~~~~~~~~\Big\downarrow n \to \infty	&	&~~~~~~~~~\Big\downarrow n \to \infty	\\
			\mathrm{sym}(\mu^M_\infty)\boxplus_1 \mathrm{sym}(\mu^N_\infty)	& =	& \mathrm{sym}(\mu^M_\infty\boxplus\mu^N_\infty).
		\end{matrix}
		\]

\section{Proofs}

\begin{proof}[Proof of Theorem \ref{th:mfl}]
The entries of the matrix $M$  are independent Ornstein-Uhlenbeck processes just as the one considered in \cite{MR1132135}. We thus can write for all $t\geq 0$
\begin{equation*}
    M_t=M_0e^{-\gamma t}+\kappa\int_0^te^{\gamma(s-t)}dW_s.
\end{equation*}

  The idea is to use a suitable rectangular free convolution, see
  \cite{rect}. More precisely, the matrices $A_{n,t}=M_0e^{-\gamma t}$ and $B_{n,t}=\kappa\int_0^te^{\gamma(s-t)}dW_s$ are independent and we may use a version of Voiculescu asymptotic
  freeness theorem, see \cite{MR1746976}. The precise result to use is  Theorem \ref{th:rectfreeconv} (see \cite[Theorem 3.13]{rect}). Indeed, $B_{n,t}$ is a matrix filled with i.i.d Gaussian random variables of variance
  \begin{equation*}
        \sigma^2_t =\mathbb{E}\left[\left|\kappa\int_0^te^{-\gamma(s-t)}dW_s\right|^2\right]=\left\{
        \begin{aligned}
            &\kappa^2\int_0^te^{2\gamma(s-t)}ds = \frac{\kappa^2}{2\gamma}(1-e^{-2\gamma t})  \quad\text{if}\quad \gamma\neq0\\
            &\kappa^2t \quad\text{if}\quad \gamma=0
        \end{aligned}
        \right.
    \end{equation*}
   by Ito's isometry, and is thus bi-unitary invariant.
  The Marcenko-Pastur theorem (see for instance \cite[Theorem 3.10]{baisilver}) tells us that in the sense of convergence in probability,
  %if $(b^{1,n}_t, \dots,b^{n,n}_t)$ are the eigenvalues of $B_{n,t}^*B_{n,t}$, almost surely, 
  \[\mu^{\frac{1}{m}B_{n,t}B_{n,t}^*}=\frac{1}{n}\sum_{i=1}^n\delta_{\lambda^{i}(\frac{1}{m}B_{n,t}B_{n,t}^*)}\underset{n\rightarrow\infty}{\longrightarrow}\mu^{MP_{\alpha, \sigma_t}},\] weakly, and thus we have :
  %as $\alpha\in(0,1]$, 
  \[\mu^{\sqrt{\frac{1}{m}B_{n,t}B_{n,t}^*}}=\frac{1}{n}\sum_{i=1}^n\delta_{\lambda^{i}(\sqrt{\frac{1}{m}B_{n,t}B_{n,t}^*})}\underset{n\rightarrow\infty}{\longrightarrow}\sqrt{.}\sharp\mu^{MP_{\alpha, \sigma_t}},\] which is a non random measure.
  
 An application of Theorem \ref{th:rectfreeconv} to $A_{n,t}$ and $B_{n,t}$ for all $t>0$ ends the first part of the proof.
 
Let us assume $\gamma\neq0$. According to \cite[Theorem 2.12]{rect}, the binary operation $\boxplus_\alpha$  is continuous (with respect to the weak convergence) on the set of symmetric probability measures on the real line, and so does the the rectangular R-transform $C$. Thus, as
 
 \[\underset{t\rightarrow+\infty}{\lim}\mathrm{sym}(e^{-\gamma t}\mu_0)=\delta_0 \quad \text{ and }\quad \underset{t\rightarrow+\infty}{\lim}\sqrt{\mu^{\mathrm{MP}_{\alpha, \sigma_t}}}=\sqrt{\mu^{\mathrm{MP}_{\alpha, \sigma_\infty}}},\]
 we have
  \[\underset{t\rightarrow+\infty}{\lim}\mathrm{sym}(\mu_t) = \delta_0\boxplus_\alpha\sqrt{\mu^{\mathrm{MP}_{\alpha, \sigma_\infty}}}.\]
 Moreover, the formula in Subsection \ref{subsect:freeconvpreli} allows to compute for all $z\in\mathbb{C}$ : 
 \begin{align*}
     M_{(\delta_0)^2}(z)&=0,\\
     H_{\delta_0}(z)&=z,\\
     C_{\delta_0}(z) & = U(0)= 0.
 \end{align*}
 Consequently, applying Theorem \ref{th:rectRtransform}, 
 \[C_{\delta_0\boxplus_\alpha\sqrt{\mu^{\mathrm{MP}_{\alpha, \sigma_\infty}}}}=C_{\delta_0}+C_{\sqrt{\mu^{\mathrm{MP}_{\alpha, \sigma_\infty}}}} = C_{\sqrt{\mu^{\mathrm{MP}_{\alpha, \sigma_\infty}}}},\]
which allows to conclude, applying Theorem \ref{th:injectrectRtransform}, that 
\[\delta_0\boxplus_\alpha\sqrt{\mu^{\mathrm{MP}_{\alpha, \sigma_\infty}}}=\sqrt{\mu^{\mathrm{MP}_{\alpha, \sigma_\infty}}},\] which ends the proof.
\end{proof}

\begin{proof}[Proof of Theorem \ref{th:limcommut}]
Let us first show that if  $(\lambda^{1,n},\dots ,\lambda^{n,n})$ is a random vector distributed according to the distribution with density with respect to the Lebesgue measure:
\begin{equation}
    \label{eq:statdistribreal}
    (\lambda^1,\dots,\lambda^n) \mapsto \frac{1}{\mathcal{Z}}\prod_{i=1}^n\left((\lambda^i)^{\frac{m-n+1}{2}-1}e^{-\frac{m\gamma}{\kappa^2}\lambda^i}\prod_{j\neq i}|\lambda^j-\lambda^i|^{1/2}\right)\mathds{1}_{0\leq\lambda^1\leq\dots\leq\lambda^n},
\end{equation}
where $\mathcal{Z}$ is a normalizing constant, and if we define the empirical measure \[\nu^{n} =\frac{1}{n}\sum_{i=1}^n\delta_{\lambda^{i,n}},\] and note $\sigma_\infty=\sqrt{\frac{\kappa^2}{2\gamma}}$, then, in the sense of weak convergence,
\[\nu^{n}\underset{\underset{n/m\rightarrow\alpha}{n\rightarrow+\infty}}{\longrightarrow}\mu^{MP_{\alpha,\sigma_\infty}},\] which is a peculiar case of (\ref{prop:stat}).

Let us consider the $n\times m$ random matrix $M_0$ whose coordinates are independent identically distributed centered real  Gaussian random variables of variance $\sigma_\infty^2=\frac{\kappa^2}{2\gamma}$. An application of \cite[Proposition 7.4.1]{Pastur:2623028} shows that of the matrix $\frac{1}{m}M_0M_0^*$ follow the density (\ref{eq:statdistribreal}).
Thus, we have 
\[\mu^{\frac{1}{m}M_0M_0^*}=\nu^{n},\] in the sense of equality in law.

Moreover, an application of the Marcenko-Pastur theorem (see for instance \cite[Theorem 3.10]{baisilver}) shows that in the sense of convergence in probability
\[\mu^{\frac{1}{m}M_0M_0^*}\underset{\underset{n/m\rightarrow\alpha}{n\rightarrow+\infty}}{\longrightarrow}\mu^{MP_{\alpha,\sigma_\infty}},\]
weakly which allows to conclude.

For all $n\in \mathbb{N}^*$, an application of \cite[Lemma 3.1 and Proposition 2.8]{JK} shows that in the sense of weak convergence,
\[\nu^{n}_t\underset{t\rightarrow+\infty}{\longrightarrow}\nu^{n}.\] 

We proved earlier that we have in the sense of weak convergence
\[\nu^{n}\underset{\underset{n/m\rightarrow\alpha}{n\rightarrow+\infty}}{\longrightarrow}\mu^{MP_{\alpha,\sigma_\infty}}.\] 
Theorem \ref{th:mfl} gives the two other limits and concludes the proof.
\end{proof}

\begin{proof}[Proof of Theorem \ref{th:mflcomplex}]
The proof of this Theorem mimics the proof of Theorem \ref{th:mfl}.
\end{proof}

\begin{proof}[Proof of Theorem \ref{th:burgers}]

 For $f$ a twice continuously differentiable real test function,
 we have thanks to the SDE (\ref{eq:general}) : 
 \[d\langle\nu^{n,W}_t,f\rangle = \frac{2\kappa }{n\sqrt{m}}\sum_{j=1}^n f'(\lambda_t^{j, n,W})\sqrt{\lambda_t^{j, n,W}}dB_t^{j}+\left(\frac{2\kappa^2 }{nm}\sum_{j=1}^n\lambda_t^{j, n,W}f''(\lambda_t^{j, n,W})+\frac{1}{n}\sum_{j=1}^n(\beta_1\kappa^2-2\gamma\lambda_t^{j, n,W})f'(\lambda_t^{j, n,W})\right)dt\] \[+ \left(\frac{\beta_1\beta_2\kappa^2 }{nm}\sum_{j=1}^nf'(\lambda_t^{j, n,W})\sum_{k\ne j}\frac{\lambda_t^{j, n,W}+\lambda_t^{k, n,W}}{\lambda_t^{j, n,W}-\lambda_t^{k, n,W}} \right)dt.\]
 We have :
 
 \begin{eqnarray}
     \frac{1}{nm} \sum_{j=1}^nf'(\lambda_t^{j, n,W})\sum_{k\ne j}\frac{\lambda_t^{j, n,W}+\lambda_t^{k, n,W}}{\lambda_t^{j, n,W}-\lambda_t^{k, n,W}} & = & \frac{1}{2nm}\sum_{j=1}^n\sum_{k\ne j}(f'(\lambda_t^{j, n,W})-f'(\lambda_t^{k, n,W}))\frac{\lambda_t^{j, n,W}+\lambda_t^{k, n,W}}{\lambda_t^{j, n,W}-\lambda_t^{k, n,W}} \nonumber \\
     & = & \frac{n}{2m}\iint_{\{x\ne y\}}(f'(x)-f'(y))\frac{x+y}{x-y}\nu^{n,W}_t(dx)\nu^{n,W}_t(dy) \nonumber \\
     & = & \frac{n}{2m}\iint(x+y)\frac{f'(x)-f'(y)}{x-y}\nu^{n,W}_t(dx)\nu^{n,W}_t(dy) -\frac{1}{m}\int xf''(x)\nu^{n,W}_t(dx).\nonumber
 \end{eqnarray}
 
 Finally we have
\begin{align}
    d \langle \nu^{n,W}_t,f\rangle  & =  \frac{2\kappa }{n\sqrt{m}}\sum_{j=1}^n f'(\lambda_t^{j, n,W})\sqrt{\lambda_t^{j, n,W}}dB_t^{j}+\left(\frac{2\kappa^2 }{nm}\sum_{j=1}^n\lambda_t^{j, n,W}f''(\lambda_t^{j, n,W})+\frac{1}{n}\sum_{j=1}^n(\beta_1\kappa^2-2\gamma\lambda_t^{j, n,W})f'(\lambda_t^{j, n,W})\right)dt \nonumber \\
     & \quad+  \beta_1\beta_2\kappa^2  \left(\frac{n}{2m}\iint(x+y)\frac{f'(x)-f'(y)}{x-y}\nu^{n,W}_t(dx)\nu^{n,W}_t(dy) -\frac{1}{m}\int xf''(x)\nu^{n,W}_t(dx)\right)dt \nonumber \\
     & =  \frac{2\kappa }{n\sqrt{m}}\sum_{j=1}^n f'(\lambda_t^{j, n,W})\sqrt{\lambda_t^{j, n,W}}dB_t^{j}+ \langle\nu^{n,W}_t, \frac{\kappa^2(2-\beta_1\beta_2) \Phi}{m} f''+(\beta_1\kappa^2-2\gamma\Phi)f'\rangle dt \nonumber \\
     & \quad + \beta_1\beta_2\kappa^2 \left(\frac{n}{2m}\iint(x+y)\frac{f'(x)-f'(y)}{x-y}\nu^{n,W}_t(dx)\nu^{n,W}_t(dy)\right)dt \nonumber \\
     & =  dM_t^{(n,f)}+ \langle\nu^{n,W}_t, \frac{\kappa^2(2-\beta_1\beta_2) \Phi}{m} f''+(\beta_1\kappa^2-2\gamma\Phi)f'\rangle dt \nonumber\\
     &\quad + \beta_1\beta_2\kappa^2 \left(\frac{n}{2m}\iint(x+y)\frac{f'(x)-f'(y)}{x-y}\nu^{n,W}_t(dx)\nu^{n,W}_t(dy)\right)dt, \label{eq:integrodifcalculus}
\end{align}
with $\Phi: x\rightarrow x$ defined on $\mathbb{R}$ and $M^{(n,f)}$ a continuous martingale verifying

$$d\langle M^{(n,f)}\rangle _t= \frac{4\kappa^2 }{n^2m}\sum_{i=1}^n\mid \lambda_t^{i, n,W}f'(\lambda_t^{i, n,W})^2\mid dt.$$

 The reader will find in \cite{maecki2019universality} a proof of the tightness of the family $\{(\nu^{n,W}_t)_{t\geq 0}; n\geq 1\}$, which allows to conclude with the previous computations  that any accumulation point of this family satisfies the evolution equation (\ref{closedequ}). The reader will also find in  \cite{maecki2019universality} the proof of the uniqueness of the solution to the equation (\ref{closedequ}) in the case where $\nu_0$ admits a characteristic function and this function is analytic on a neighbourhood of the origin, this proof being based on the considerations made in Remark \ref{rmk:moments}.

%The estimates still have to be computed, but by independence of the Brownian motions, we have $M^{(n,f)}_t=o\left(\frac{1}{\sqrt{n}}\right)$ when $n\longrightarrow\infty$.

%We us now prove the tightness of the family $\{(\nu^n_t)_{t\geq 0}; n\geq 1\}$. Let $(f_n:\mathbb{R}\mapsto\mathbb{C})_{n\geq 1}$ be a dense subsequence of  smooth bounded  functions verifying
%\begin{center}
%    $\langle\nu,f_j\rangle  = \langle\nu',f_j\rangle $ for all $j$ $\implies \nu=\nu'$,
%\end{center}
%and pick a smooth  function $f_0:\mathbb{R}\mapsto[1,\infty)$ such that for all $x\in\mathbb{R}, f(-x)=f(x)$ $f_0(x)\uparrow +\infty$ when $x \uparrow\infty$. In order to get the tightness of $(\nu^n)_{n\in \mathbb{N}}$, it is sufficient to prove that for each $j\in\mathbb{N}$, the sequence of real valued continuous functions
%$$(\langle\nu^n_t, f_j\rangle )_{t\geq 0}, n\geq 1$$ is tight.

%Using the assumptions $\Phi f_j''$, $f'$, $\Phi f_j'$, $f_j''$ and $f_j$ are bounded for all $j\geq 1$, and using (\ref{1}), the sequence of real valued processes  $(\langle\nu^n_t, f_j\rangle )_{t\geq 0}, n\geq 1$ is tight for every $j\geq 1$ and for $j=0$ if we have $\Phi f_0''$, $f'$, $\Phi f_0'$ and $f_0''$  bounded.

%From the tightness, we have the convergence along a subsequence  verifying (\ref{1}): the limit process $\nu$ thus verifies (\ref{closedequ}) (considering for now that the martingale part goes to 0, TBC). 

Let us now prove the second part of the Theorem. Applying (\ref{closedequ}) with 
 $$f(v)=\frac{1}{z-v},$$
we get that $G_t(z)$ obeys
 
 \begin{eqnarray}
     G_t(z) &= & G_0(z) + \int_0^t \beta_1\kappa^2\frac{1}{(z-x)^2}-2\gamma\frac{x}{(z-x)^2}\nu_s(dx)ds +\frac{\alpha\beta_1\beta_2\kappa^2 }{2}\iint(x+y)\frac{f'(x)-f'(y)}{x-y}\nu_t(dx)\nu_t(dy). \nonumber 
\end{eqnarray}
We have
 \begin{align*}
     \frac{1}{2}\iint(x+y)\frac{f'(x)-f'(y)}{x-y}\nu_t(dx)\nu_t(dy)
     & =  \frac{1}{2}\iint\left(\frac{1}{(z-x)^2}-\frac{1}{(z-y)^2}\right)\frac{x+y}{x-y}\nu_t(dx)\nu_t(dy)  \\
%     & =  \frac{1}{2}\iint \frac{(y^2-x^2-2yz+2xz)(x+y)}{(z-x)^2(z-y)^2(x-y)} \nu_t(dx)\nu_t(dy)  \\
     & =  \frac{1}{2}\iint \frac{(2z-x-y)(x+y)}{(z-x)^2(z-y)^2} \nu_t(dx)\nu_t(dy)  \\
     & =  \iint \frac{2zx-xy-x^2+z^2-z^2}{(z-x)^2(z-y)^2} \nu_t(dx)\nu_t(dy) \\
     & =  \iint -\frac{1}{(z-y)^2}-\frac{xy}{(z-x)^2(z-y)^2}+\frac{z^2}{(z-x)^2(z-y)^2} \nu_t(dx)\nu_t(dy)  \\
     & =  \iint -\frac{1}{(z-y)^2}+\frac{z^2}{(z-x)^2(z-y)^2} \nu_t(dx)\nu_t(dy)  -\left(\int\frac{x}{(z-x)^2}\nu_t(dx)\right)^2   \\
     & =  \iint -\frac{1}{(z-y)^2}+\frac{z^2}{(z-x)^2(z-y)^2} \nu_t(dx)\nu_t(dy) -\left(\int\frac{1}{x-z}+\frac{z}{(z-x)^2}\nu_t(dx)\right)^2   \\ 
     & =  -\int \frac{1}{(z-x)^2}\nu_t(dx)+z^2\left(\int\frac{1}{(z-x)^2} \nu_t(dx)\right)^2-\left(\int\frac{1}{z-x}\nu_t(dx)\right)^2\\
     &  +2z\left(\int\frac{1}{z-x}\nu_t(dx)\right)\left(\int \frac{1}{(z-x)^2}\nu_t(dx)\right)-z^2\left(\int\frac{1}{(z-x)^2}\nu_t(dx)\right)^2, 
 \end{align*}
   so that $G_t(z)$ obeys
 
 \begin{eqnarray}
     G_t(z) &= & G_0(z) \nonumber + \int_0^t \beta_1\kappa^2\frac{1}{(z-x)^2}-2\gamma\left(\frac{1}{x-z}+\frac{z}{(z-x)^2}\right)\nu_s(dx)ds \nonumber \\
     & & -\alpha\beta_1\beta_2\kappa^2 \left[\int \frac{1}{(z-x)^2}\nu_t(dx)+\left(\int\frac{1}{z-x}\nu_t(dx)\right)^2-2z\left(\int\frac{1}{z-x}\nu_t(dx)\right)\left(\int \frac{1}{(z-x)^2}\nu_t(dx)\right)\right].  \nonumber
 \end{eqnarray}
The conclusion is given replacing the previous terms by the corresponding derivatives of $G$.
\end{proof}

\begin{proof}[Proof of Proposition \ref{propoburgers}]
\begin{enumerate}[label=(\roman*)]

\item 
We first recall the Cauchy-Stieltjes transform of a Marcenko-Pastur law (see for instance \cite[Lemma 3.11]{baisilver} :
\[G_{\mu^{MP_{\rho,\sigma}}}(z)=\int\frac{\mu^{MP_{\rho,\sigma}}(dv)}{z-v}=\frac{-\sigma^2(1-\rho)+z-\sqrt{(z-\sigma^2-\rho\sigma^2)^2-4\rho\sigma^4}}{2\rho z\sigma^2},\] for all $z\in\mathbb{C}_+$.

We now want to find conditions on the functions $t\rightarrow\sigma(t)\in\mathbb{R}_+$ and $t\rightarrow\rho(t)\in\mathbb{R_+}$ with $\sigma(0)=\sigma_0$ and $\rho(0)=\rho_0$ such that $(t,z)\longrightarrow G_{\mu^{MP_{\rho(t),\sigma(t)}}}(z)$ is solution to the PDE (\ref{PDEBurgers}):
\begin{equation*}
    \begin{split}
        \frac{\partial}{\partial t}G_t(z) &=(\alpha\beta_1\beta_2\kappa^2 -\beta_1\kappa^2+2\gamma z)\frac{\partial}{\partial z}G_t(z) - 2\alpha\beta_1\beta_2\kappa^2  zG_t(z)\frac{\partial}{\partial z}G_t(z)-\alpha\beta_1\beta_2\kappa^2  G_t^2(z)+2\gamma G_t(z),\\
        G_0(z)&=\int\frac{\nu_0^W(dv)}{z-v}=\phi(z).
    \end{split}
\end{equation*}
We have
\begin{align}
    \frac{\partial}{\partial t}G_{\mu^{MP_{\rho(t),\sigma(t)}}}(z) & = -\frac{\sigma(z-\sigma^2)\dot\rho+2\dot\sigma\rho z}{2\rho^2z\sigma^3}-\frac{\sigma((1-\rho)\sigma^4-z(\rho+2)\sigma^2+z^2)\dot\rho+2(z-\sigma^2(1+\rho))z\dot\sigma\rho}{2\rho^2z\sigma^3\sqrt{(z-\sigma^2-\rho\sigma^2)^2-4\rho\sigma^4}}\nonumber \\
    \frac{\partial}{\partial z}G_{\mu^{MP_{\rho(t),\sigma(t)}}}(z) & =  \frac{1-\rho}{2\rho z^2}-\frac{z(1+\rho)-\sigma^2(1-\rho)^2}{2\rho z^2\sqrt{(z-\sigma^2-\rho\sigma^2)^2-4\rho\sigma^4}} \nonumber \\
    (G_{\mu^{MP_{\rho(t),\sigma(t)}}}(z))^2 & = \frac{\sigma^4(1-\rho)^2-2z\sigma^2+z^2+[z-\sigma^2(1-\rho)]\sqrt{(z-\sigma^2-\rho\sigma^2)^2-4\rho\sigma^4}}{2\rho^2z^2\sigma^4} \nonumber \\
    G_{\mu^{MP_{\rho(t),\sigma(t)}}}(z)\frac{\partial}{\partial z}G_{\mu^{MP_{\rho(t),\sigma(t)}}}(z) & =  \frac{-(1-\rho)^2\sigma^2+z}{2\rho^2z^3\sigma^2}+\frac{z^2+z\sigma^2[\rho^2\rho-2]+\sigma^4(1-\rho)^3}{2\rho^2z^3\sigma^2\sqrt{(z-\sigma^2-\rho\sigma^2)^2-4\rho\sigma^4}}.\nonumber
\end{align}

The equation solved by $G$ can be written for $z\in\mathbb{C}_+, t\geq0$:

%\begin{equation*}
%    \begin{split}
%        &-\frac{z(\sigma^2(z-\sigma^2)\dot{\rho}+2z\sigma\dot{\sigma}\rho )\sqrt{(z-\sigma^2-\rho\sigma^2)^2-4\rho\sigma^4}+((1-\rho)\sigma^4-z(\rho+2)\sigma^2+z^2)z\sigma^2\dot{\rho}+2z^2\rho\sigma\dot{\sigma}(z-\sigma^2(1+\rho))}{2\rho^2\sigma^4z^2\sqrt{(z-\sigma^2-\rho\sigma^2)^2-4\rho\sigma^4}}\\
%        & = \frac{(\kappa^2(\alpha -\rho)(1-\rho)\sigma^4+2\gamma z^2\sigma^2\rho-\alpha \kappa^2  z^2)\sqrt{(z-\sigma^2-\rho\sigma^2)^2-4\rho\sigma^4}}{2\rho^2\sigma^4z^2\sqrt{(z-\sigma^2-\rho\sigma^2)^2-4\rho\sigma^4}}\\
%        &-\frac{\kappa^2(\alpha -\rho)(\rho-1)^2\sigma^6+(1+\rho)((2\gamma z+\kappa^2)\rho-\alpha \kappa^2 )z\sigma^4-((2\gamma z+\alpha \kappa^2 )\rho+\alpha \kappa^2 )z^2\sigma^2+\alpha \kappa^2  z^3}{2\rho^2\sigma^4z^2\sqrt{(z-\sigma^2-\rho\sigma^2)^2-4\rho\sigma^4}}
%    \end{split}
%\end{equation*}
\begin{equation}
\label{eq:H0}
    H(z,t)=0
\end{equation}
with
\begin{equation*}
    \begin{split}
        H(z,t)&=\frac{z\sigma^2(z-\sigma^2)\dot\rho+2\sigma\rho\dot\sigma z^2+\beta_1\kappa^2(1-\rho)(\beta_2\alpha -\rho)\sigma^4+2\sigma^2\rho\gamma z^2-\beta_1\beta_2\alpha \kappa^2  z^2}{2\rho^2\sigma^4z^2}\\
        &-\frac{z((1-\rho)\sigma^4-z(\rho+2)\sigma^2+z^2)\sigma^2\dot\rho+2z^2(z-(1+\rho)\sigma^2)\rho\sigma\dot\sigma+\beta_1\kappa^2(\rho-1)^2(\beta_2\alpha -\rho)\sigma^6
        %-z((2\gamma z+\kappa^2)\rho-\beta_2\alpha \kappa^2 )(\rho+1)\sigma^4+z^2((2\gamma z+\beta_2\alpha \kappa^2 )\rho+\beta_2\alpha \kappa^2 )\sigma^2-\beta_2\alpha \kappa^2  z^3
        }{2\rho^2\sigma^4z^2\sqrt{(z-\sigma^2-\rho\sigma^2)^2-4\rho\sigma^4}}\\
        &+\frac{
        %z((1-\rho)\sigma^4-z(\rho+2)\sigma^2+z^2)\sigma^2\dot\rho+2z^2(z-(1+\rho)\sigma^2)\rho\sigma\dot\sigma-(\rho-1)^2(\beta_2\alpha \kappa^2 -\kappa^2\rho)\sigma^6
        -z((2\gamma z+\beta_1\kappa^2)\rho-\beta_1\beta_2\alpha \kappa^2 )(\rho+1)\sigma^4+z^2((2\gamma z+\beta_1\beta_2\alpha \kappa^2 )\rho+\beta_1\beta_2\alpha \kappa^2 )\sigma^2-\beta_1\beta_2\alpha \kappa^2  z^3}{2\rho^2\sigma^4z^2\sqrt{(z-\sigma^2-\rho\sigma^2)^2-4\rho\sigma^4}}.
    \end{split}
\end{equation*}
Using the fact that 
\begin{equation*}
    ((z-\sigma^2-\rho\sigma^2)^2-4\rho\sigma^4)^{-\frac{1}{2}}=\frac{1}{z}+\frac{\sigma^2(1+\rho)}{z^2}+\sigma^4\frac{\rho^2+4\rho+1}{z^3}+o_{|z|\rightarrow+\infty}\left(\frac{1}{|z|^3}\right),
\end{equation*}
we have for $z\in\mathbb{C}_+$, $t\geq0$ :
\begin{equation*}
\begin{split}
        H(z,t)=&\frac{\sigma^2\dot{\rho}+2\sigma\rho\dot{\sigma}-\beta_1\beta_2\alpha \kappa^2 +2\gamma\rho\sigma^2}{\sigma^4\rho^2}-\frac{\dot\rho}{\rho^2}\frac{1}{z}\\
        &-\frac{\beta_1\kappa^2(\rho-1)(\beta_2\alpha-\rho)-4\sigma\rho^2\dot\sigma+(\beta_1\kappa^2-4\gamma\sigma^2)\rho^2+\beta_1\kappa^2(1+\beta_2\alpha )\rho-\beta_1\beta_2\alpha\kappa^2 }{2\rho^2}\frac{1}{z^2}+o_{|z|\rightarrow+\infty}\left(\frac{1}{|z|^2}\right),
\end{split}
\end{equation*}

which gives in particular by identity (\ref{eq:H0}) : 
\begin{equation*}
    \begin{split}
        \sigma^2\dot{\rho}+2\sigma\rho\dot{\sigma}-\beta_1\beta_2\alpha \kappa^2 +2\gamma\rho\sigma^2 &=0,\\
        \dot\rho &=0,\\
        \beta_1\kappa^2(\rho-1)(\beta_2\alpha-\rho)-4\sigma\rho^2\dot\sigma+(\beta_1\kappa^2-4\gamma\sigma^2)\rho^2+\beta_1\kappa^2(1+\beta_2\alpha )\rho-\beta_1\beta_2\alpha\kappa^2 &=0.
    \end{split}
\end{equation*}

We thus have
\[\rho(t)=\beta_2\alpha  ,\]
\[\dot{\sigma^2}=\beta_1\kappa^2-2\gamma\sigma^2,\]
and 
\begin{equation*}
        \sigma^2(t)= \left\{
        \begin{aligned}
            &\left(\sigma^2_0-\frac{\beta_1\kappa^2}{2\gamma}\right)\mathrm{e}^{-2\gamma t}+\frac{\beta_1\kappa^2}{2\gamma} \quad\text{if}\quad \gamma\neq0\\
            &\sigma_0^2+\beta_1\kappa^2t \quad\text{if}\quad \gamma=0,
        \end{aligned}
        \right.
    \end{equation*} for all $t\in\mathbb{R}_+$.

Reciprocally, we verify with this definition of $\sigma$ that $G_\mu^{MP_{\beta_2\alpha,\sigma(t)}}$ is a solution to the PDE (\ref{PDEBurgers}).
\item Let us consider the stationary version of the PDE (\ref{PDEBurgers}) :
\begin{equation*}
    \begin{split}
        (\beta_1\beta_2\alpha\kappa^2 -\beta_1\kappa^2+2\gamma z)\frac{\mathrm{d}}{\mathrm{d z}}G(z) - 2\beta_1\beta_2\alpha\kappa^2  zG(z)\frac{\mathrm{d}}{\mathrm{d z}}G(z)-\beta_1\beta_2\alpha\kappa^2  G^2(z)+2\gamma G(z)=0.
    \end{split}
\end{equation*}
We can integrate it 
\begin{equation*}
    \begin{split}
         -\beta_1\beta_2\alpha\kappa^2  zG^2(z)+(2\gamma z+(\beta_2\alpha-1)\beta_1\kappa^2) G(z)=C,
    \end{split}
\end{equation*}
where $C\in\mathbb{C}$ is an integration constant. It gives 
\begin{equation*}
    G(z) = \frac{2\gamma z+(\beta_2\alpha-1)\beta_1\kappa^2\pm\sqrt{(2\gamma z+(\beta_2\alpha-1)\beta_1\kappa^2)^2-4C\beta_1\beta_2\alpha\kappa^2z}}{2\beta_1\beta_2\alpha\kappa^2 z}.
\end{equation*}
For this function to be the Cauchy-Stieltjes transform of a probability measure on $\mathbb{R}$, we need it to verify the condition given in assertion $(3)$ of Theorem \ref{thmcauchytransform} : \[\underset{y\rightarrow\infty}{\limsup}y|G(\mathrm{i}y)|=1,\] necessarily, the sign $\pm$ must be replaced by a minus sign and $C=2\gamma$. Thus, the only stationary solution to the PDE (\ref{PDEBurgers}) corresponding to the Cauchy-Stieltjes transform of a probability measure on $\mathbb{R}$ is
\begin{equation*}
\begin{split}
    G(z) &= \frac{2\gamma z+(\beta_2\alpha-1)\beta_1\kappa^2-\sqrt{(2\gamma z+(\beta_2\alpha-1)\beta_1\kappa^2)^2-8\gamma\beta_1\beta_2\alpha\kappa^2z}}{2\beta_1\beta_2\alpha\kappa^2 z}\\
    &= \frac{-\sigma^2(1-\rho)+z-\sqrt{(z-\sigma^2-\rho\sigma^2)^2-4\rho\sigma^4}}{2\rho z\sigma^2},
\end{split}
\end{equation*}
with $\rho = \beta_2\alpha$ and $\sigma^2=\frac{\beta_1\kappa^2}{2\gamma}$. We recognize the Cauchy-Stieltjes transform of the Marcenko-Pastur distribution with parameters $\rho$ and $\sigma$, and verify that $G$ is solution to the stationary version of the PDE (\ref{PDEBurgers}). Under the assumptions of Theorem \ref{th:burgers}, and applying Theorem \ref{th:burgers} with  $\nu_0=\mu^{MP_{\beta_2\alpha,\sigma_0}}$, then $\mu^{MP_{\beta_2\alpha,\sigma_0}}$ is the unique solution to the equation $(\ref{closedequ})$.

\end{enumerate}

\end{proof}

\begin{proof}[Proof of Theorem \ref{th:betamfW}]
By  Theorem \ref{th:burgers},  the  integro-differential equation (\ref{closedequ}) is  verified by any accumulation point of the family $\{(\nu^{n,W}_t)_{t\geq0},n\geq1\}$.

Let $n\leq m$ with $\frac{n}{m}\underset{n\rightarrow+\infty}{\longrightarrow}\beta_2\alpha$. Let us define the  stochastic process  $(M_t)_t$ taking its values in the space of  $n\times m$ matrices with real entries verifying the following SDE
\begin{equation*}
    dM_t = \sqrt{\beta_1}\kappa dW_t-\gamma M_tdt.
\end{equation*}
The eigenvalues of $\frac{1}{m}M_tM_t^*$ verify the SDE (\ref{eds}) with $\kappa$ replaced by $\sqrt{\beta_1}\kappa$. By Theorem \ref{th:burgers} (see more precisely the computation (\ref{eq:integrodifcalculus}) in the proof), the integro-differential equation (\ref{closedequ}) is also verified by any accumulation point of the family of the  empirical spectral measures of $\left(\frac{1}{m}M_tM_t^*\right)_{1\leq n\leq m}$ with $\frac{n}{m}\underset{n\rightarrow+\infty}{\longrightarrow}\beta_2\alpha$, which ends the proof.

By Theorem \ref{th:burgers}, we have  uniqueness of the solution to the integro-differential equation (\ref{closedequ}), and by Theorem \ref{th:mfl} we have the expression of the limit of $\mu^{n,W}_t=\sqrt{\nu^{n,W}_t}$ for all $t\geq0$. 
\end{proof}

\begin{proof}[Proof of Theorem \ref{th:limcommutW}]
For all $n\in \mathbb{N}^*$, an application of \cite[Lemma 3.1 and Proposition 2.8]{JK} shows that in convergence in law
\[\nu^{n,W}_t\underset{t\rightarrow+\infty}{\longrightarrow}\nu^{n,W},\] 
where $\nu^{n,W}$ is defined such as in equation (\ref{prop:stat}).
By equation (\ref{prop:stat}), we have
\[\nu^{n,W}\underset{\underset{n/m\rightarrow\alpha}{n\rightarrow+\infty}}{\longrightarrow}\mu^{MP_{\alpha,\sigma_\infty}}.\] 
Theorem \ref{th:betamfW} gives the two other limits and concludes the proof.
\end{proof}

\paragraph{Acknowledgement} : I thank Benjamin Jourdain and Djalil Chafaï for numerous fruitful discussions.

\nocite{*}
\bibliographystyle{amsalpha}
\bibliography{biblio}
\end{document}